 \newtheorem{theorem}{Theorem}[section]
 \newtheorem{corollary}[theorem]{Corollary}
 \newtheorem{lemma}[theorem]{Lemma}
 \newtheorem{proposition}[theorem]{Proposition}
 \newtheorem{remark}[theorem]{Remark}
 \newtheorem{example}[theorem]{Example}
 \newtheorem{examples}[theorem]{Examples}
 \theoremstyle{definition}
 \newtheorem{definition}[theorem]{Definition}
 \numberwithin{equation}{section}
\begin{document}
  \renewcommand {\Re}{\text{Re}}
  \renewcommand {\Im}{\text{Im}}
   
  \newcommand {\sfrac}[2] { {{}^{#1}\!\!/\!{}_{#2}}} 
  \newcommand {\pihalbe}{\sfrac{\pi}2}
  \newcommand {\onehalf}{\sfrac{1}2}
  \newcommand {\Borel}{{\mathcal Bo}}
  \newcommand {\Poisson}{{\mathscr P}}
  \newcommand {\CC}{\mathbb C}  
  \newcommand {\DD}{\mathbb D}  
  \newcommand {\EE}{\mathbb E}
  \newcommand {\NN}{\mathbb N}
  \newcommand {\PP}{\mathbb P} 
  \newcommand {\ZZ}{\mathbb Z}
  \newcommand {\RR}{\mathbb R}
  \newcommand {\cT}{\mathcal T}
  \newcommand {\sigmaA} {\sigma_A}
  \newcommand {\sigmaP} {\sigma_P}
  \newcommand {\sigmaR} {\sigma_R}
  \newcommand {\BOUNDED}{\mathcal B}
  \newcommand {\RANGE}{\mathcal R}
  \newcommand {\DOMAIN}{\mathcal D}
  \newcommand {\FOURIER}{\mathcal F}
  \newcommand {\LAPLACE}{\mathcal L}
  \newcommand {\eins} {\mathbbm 1}
  \newcommand {\al}{\alpha}
  \newcommand {\la}{\lambda}
  \newcommand {\eps}{\varepsilon}
  \newcommand {\Ga}{\Gamma}
  \newcommand {\ga}{\gamma}
  \newcommand {\om}{\omega}
  \newcommand {\Om}{\Omega}
  \newcommand {\Xm}{X_{-1}}
  \newcommand {\SECT}[1] {S_{{#1}}}
  \newcommand {\CSECT}[1] {\overline{S_{{#1}}}}
  \newcommand {\STRIP}[1] {{\rm St}_{{#1}}}
  \newcommand {\norm}[1] {\| #1 \|}  
                                     
  \newcommand {\abs}[1] {|#1|}
  \newcommand {\lrnorm}[1]{\left\| #1 \right\|}
  \newcommand {\bignorm}[1]{\bigl\| #1 \bigr\|}
  \newcommand {\Bignorm}[1]{\Bigl\| #1 \Bigr\|}
  \newcommand {\Biggnorm}[1]{\Biggl\| #1 \Biggr\|}
  \newcommand {\biggnorm}[1]{\biggl\| #1 \biggr\|}
  \newcommand {\Bigidual}[3] {\Bigl\langle #1, #2 \Bigr\rangle_{#3}}
  \newcommand {\bigidual}[3] {\bigl\langle #1, #2 \bigr\rangle_{#3}}
  \newcommand {\idual}[3] {\langle #1, #2 \rangle_{#3}}
  \newcommand {\Bigdual}[2] {\Bigidual{#1}{#2}{}}
  \newcommand {\bigdual}[2] {\bigidual{#1}{#2}{}}
  \newcommand {\dual}[2] {\idual{#1}{#2}{} }
  \newcommand {\SP}[2]{ [#1 | #2] }
  \newcommand {\weg} {\backslash}
  \newcommand {\suchthat}{:\;}
  \newcommand {\emb} {\hookrightarrow}
  \newcommand {\McIntosh} {Mc{\hspace*{0.2ex}}Intosh}
\renewcommand{\labelenumi} {(\alph{enumi})}    
\renewcommand{\labelenumii}{(\roman{enumii})}
\renewcommand{\theenumi} {(\alph{enumi})}      
\renewcommand{\theenumii}{(\roman{enumii})}    
\allowdisplaybreaks
\title[Exact observability, square functions and spectral theory]
 {Exact observability, square functions and spectral theory}
 \author[Bernhard H. Haak]{Bernhard H. Haak} \address{%
   Institut de Math\'ematiques de Bordeaux\\%
   Universit\'e Bordeaux 1\\351, cours de la Lib\'eration\\%
   33405 Talence CEDEX\\FRANCE}
 \email{bernhard.haak@math.u-bordeaux1.fr} \thanks{The first author
   was partially supported by the ANR project ANR-09-BLAN-0058-01}
 \author[El Maati Ouhabaz]{El Maati Ouhabaz} 
 \address{%
Institut de Math\'ematiques de Bordeaux\\%
Universit\'e Bordeaux 1\\351, cours de la Lib\'eration\\%
33405 Talence CEDEX\\FRANCE}
 \email{ElMaati.Ouhabaz@math.u-bordeaux1.fr}

\subjclass{93B07, 43A45, 47A60}
\keywords{Exact observability, admissibility, spectral theory of
  semigroups and groups, $H^\infty$ functional calculus, square
  function estimates}
\date{\today}
\begin{abstract}
In the first part of this article we introduce the notion of a
backward-forward conditioning (BFC) system that generalises the notion of
zero-class admissibiliy introduced in \cite{XuLiuYung}. We can show
that unless the spectum contains a halfplane, the BFC property occurs
only in siutations where the underlying semigroup extends to a group.
In a second part we present a sufficient condition for exact
observability in Banach spaces that is designed for
infinite-dimensional output spaces and general strongly continuous
semigroups. To obtain this we make use of certain weighted square
function estimates. Specialising to the Hilbert space situation we
obtain a result for contraction semigroups without an analyticity
condition on the semigroup.
\end{abstract}
\maketitle
\section{Introduction}
In this article we study exact observability of linear systems $(A,
C)$ on Banach spaces of the form
\[
\left\{
  \begin{array}{lcl}
    x'(t) + A x(t) &=& 0 \\
    x(0) &=& x_0 \\
    y(0) &=& C x(t)
  \end{array}
\right.
\]
We suppose throughout this article that $-A$ is the generator of a
strongly continuous semigroup $T(t)_{t\ge 0}$ on a Banach space
$X$. For details on semigroup theory used frequently in this article
we refer to e.g. to the textbooks
\cite{EngelNagel,HillePhillips:semigroups,Pazy}. Since we deal with
unbounded operators in general, we will note $\DOMAIN(A)$ the domain of
$A$ and $\RANGE(A)$ its range.
Let $Y$ be another Banach space and suppose that the observation
operator $C: \DOMAIN(A)  \to Y$ is  bounded and  linear when 
$\DOMAIN(A)$ is  endowed with the graph norm $\norm{ x }_{\DOMAIN(A)}
= \norm{x} + \norm{ A x}$.  Here we denote by $\norm{\dot}$ the norm of $X$.  
Since the observation operator $C$ is
generally unbounded, the concept of admissibility is introduced. It
means that the output $y$ of the system (usually measured in $L^2$
norm) depends continuously on the initial value $x_0$.
\begin{definition}  
 We say that $C$  is $L^2$-admissible in time
  $\tau>0$ (for $A$ or for $T(t)_{t\ge 0}$) if there exists a constant
  $M(\tau)>0$ such that
\[
      \sup_{ x\in \DOMAIN(A), \norm{x}=1} 
      \int_0^\tau \norm{CT(t) x}_Y^2\,dt =: M(\tau)^2 < \infty.
\]
\end{definition}  
\medskip
\begin{definition}  
We say that $C$ is exactly
$L^2$-observable for $A$ (or for $T(t)$)  in time $\eta>0$ if there
exists a constant  $m(\eta)>0$ such that
\[
      \inf_{ x\in \DOMAIN(A), \norm{x}=1} \int_0^\eta
      \norm{CT(t)x}_Y^2\,dt =: m(\eta)^2 > 0.
\]
\end{definition}
For more information the notion of admissible observation (or control)
operators we refer the reader to the overview article
\cite{JacobPartington:survey} or, both for admissibility and
observability issues to the recent book \cite{TucsnakWeiss} and
references therein.  We summarise some well-known facts and notations:
When there is no risk of confusion, 'admissible' means $L^2$
admissible in {\em some} finite time $\tau>0$ and 'exact observable'
means exactly $L^2$-observable for $A$ in {\em some} finite time
$\eta>0$. We say that $C$ is infinite-time admissible if $M(\infty) <
\infty$ and exactly observable  in infinite time if $m(\infty) > 0$.
Finite-time admissibility does not depend on the choice of $\tau>0$.
Nevertheless it turns out to be useful to study the (clearly
non-decreasing) functions $t \mapsto m(t)$ and $t\mapsto M(t)$.  In
the 'dual' situation of a control operator $B$ the quantity
$m(\eta)^{-1}$ is often referred to as {\em control cost} of a system.
We refer e.g. to \cite{Miller:Lebeau-Robbiano,
  Seidman:survey-control-cost, TenenbaumTucsnak:control-cost} and
references therein for more details.
 
Independence of the time $\tau>0$ of the notion of admissibility means
that a lack of admissibility expresses either by $M(\tau)=\infty$ for
all $\tau >0$ or by $M(\tau)<\infty$ for finite $\tau$ while
$M(\infty)=\infty$. On the other hand, a lack of exact observability
expresses by $m(\eta)=0$ for $0<\eta<\eta_0$ for $\eta_0 \in
(0,\infty]$. We remark that Example~\ref{ex:JPP} below satisfies
$m(\eta)=0$ for $0<\eta< 2$, $m(2)=1$ while $m(\eta)\to {+}\infty$ for
$\eta\to {+}\infty$.
\medskip
Since most parabolic equations like for example the heat equation are
not exactly observable unless very special observations are chosen
whereas exact observability appears frequently for hyperbolic systems
such as the wave equation, it appears natural to study necessary
spectral conditions of the generator $-A$ that make exact
observability possible or impossible.  In this direction we extend and
complete former results of \cite{XuLiuYung}. We introduce the notion
of backward-forward conditioning  \ref{eq:etoile}-systems.  These are
admissible and exactly observable systems for which $M(\eta) <
m(\tau)$ for some $\eta < \tau$. We analyse spectral properties of the
generator $-A$ of the semigroup of such systems. In particular, we
prove that the approximate point spectrum of $A$ is contained in a
vertical strip. Therefore, the boundary of the spectrum is also
contained in a strip. We prove in addition that if $(A, C)$ is an
admissible \ref{eq:etoile}-system such that the spectrum of $A$ does
not contain a half-plane then the semigroup actually extends to a
group.  Note that every bounded group with an admissible operator $C$
is a \ref{eq:etoile}-system.  Since (\ref{eq:etoile}) is a frequent
property that typically is more likely to hold the more 'regular' the
operators $A$ and $C$ are, this shows that exact observability is
considerably rare outside the group context.
A second part of this paper is devoted to a new sufficient criterion
for exact observability.  Under an assumption of square function type
estimate we prove that a condition like
\[ 
  \norm{CA^{-\alpha} x}_Y \ge \delta  \norm{x},
\]
implies exact observability.  Here $\alpha \in (0,1)$ 
and $\delta$ is a positive constant.\\
Without any further assumption, we show that if $-A$ is the generator
of a contraction semigroup on a Hilbert space and $C: \DOMAIN(A) \to
Y$ is such that $ \norm{CA^{-\onehalf} x}_Y \ge \delta \norm{x}$, then
$(A,C)$ is exactly observable.
In order to state and prove our criterion we make a heavy use of square function estimate of type
\[ 
       \norm{x}^2  
 \le K^2 \int_0^\infty \norm{ (tA)^{-\beta} (T(2t^{2\beta})-T(t^{2\beta})) x }^2 
  \,\tfrac{dt}t,
\] 
where $K$ is a positive
constant, $\beta \in (0,1)$ and $T(t)$ denotes the semigroup generates by $-A$.  In the case  where $\beta=\onehalf$,  this corresponds to
a lower square function estimate 
\[ \norm{x}^2  
\le K^2 \int_0^\infty \norm{ \varphi(tA)x }^2   \,\tfrac{dt}t,
\]
where $\varphi(z) := z^{-\onehalf} (e^{-2z} - e^{-z} )$. On Hilbert spaces, it  is well known that such estimate is related to the 
holomorphic functional calculus of the operator $A$.  The needed results on this 
functional calculus and associated square function estimates for
sectorial operators will be sketched in the last two sections.
As we will explain later our criterion applies for bounded analytic
semigroups on Hilbert spaces whose generator admits a bounded
$H^\infty$-calculus -- but the first part of this paper reveals this
to be impossible for a large class of systems unless $A$ is
bounded. One important aspect of the criterion that might also help in
other situations is therefore how to avoid making use of analyticity
assumption of the semigroup.  We discuss  at the end of this papers two  examples. 
\section{BFC-systems}
Let $X$ and $Y$ be Banach spaces with norms $\norm{ \cdot }$ and
$\norm{ \cdot }_Y$, respectively. Throughout this section,
$(T(t))_{t\ge0}$ is a strongly continuous semigroup on $X$ whose
generator is denoted by $-A$.
\begin{definition}  
  An admissible observation operator $C$ for  $A$ is called {\em
    zero-class admissible}, if $\lim_{\tau\to0+} M(\tau) = 0$.
\end{definition}
Note that if the semigroup $T(t)$ is bounded analytic with generator
$-A$ then for all $\al \in [0,\onehalf)$, $A^\al$ is zero-class
admissible. This follows from the inequality $\norm{ A^\al T(t) } \le
M t^{-\al}$. Consequently, if $C$ is bounded on such a fractional
domain space $\DOMAIN(A^\al)$ and thus
\[
     \norm{ C T(t) x}_Y  
  \le M \left[ \norm{A^\al T(t) x} + \norm{T(t) x}  \right], 
\]
$C$ is zero-class admissible.\\
It is also a obvious  fact that every bounded operator $C: X \to Y$ is
zero-class admissible. Here $(T(t))_{t\ge0}$ is merely a strongly
continuous semigroup on $X$.
\medskip
Consider now the linear operator $\widetilde \Psi_\tau: X \to L^2(0,
\tau; Y)$ defined by $\widetilde \Psi_\tau x = CT(\cdot)x$.  Then
admissibility (i.e., $M(\tau)<\infty$) means that $\widetilde
\Psi_\tau$ is a bounded operator. If in addition $m(\tau) >0$, then
$\widetilde \Psi_\tau$ is injective and has closed range.  Therefore,
we may consider the operator $\Psi_\tau: X \to \RANGE(\widetilde
\Psi_\tau)$, $\Psi_\tau = \widetilde \Psi_\tau$.  We have
\begin{equation}\label{eq:norm-psi-tau-inv}
\frac1{m(\tau)} 
= \sup_{ x\in \DOMAIN(A), \norm{x}=1} \frac{\norm{x}}{\norm{\Psi_\tau x}} 
= \sup_{ x\in \DOMAIN(A), \norm{x}\not=0} \frac{\norm{x}}{\norm{\Psi_\tau x}} 
= \norm{\Psi_\tau^{-1} }.
\end{equation}
We introduce the following definition. 
\begin{definition}
  We say that the system $(A, C)$ has the backward-forward
  conditioning property or shortly that $(A, C)$ is
  a~\ref{eq:etoile}-system if there exists some $0< \eta < \tau $ such
  that $C$ is admissible and exactly observable in time $\tau$ and if
\begin{equation}  \label{eq:etoile}\tag{BFC}
  \norm{ \Psi_{\tau}^{-1} } \, \norm{\Psi_\eta} < 1.
\end{equation}
\end{definition}
The condition (\ref{eq:etoile}) is clearly a conditioning property
for the output operator with different times $\eta$ and $\tau$ which
correspond to a backward and forward evolution of the system.  It also
follows from (\ref{eq:norm-psi-tau-inv}) that (\ref{eq:etoile}) is
equivalent to
  \begin{equation} \label{eq:mM}
  M(\eta) < m(\tau) \qquad {\rm for\  some}\  \eta<\tau.
  \end{equation}
  Therefore, if $C$ is exactly observable in some time $\tau$ and of
  zero-class, then (\ref{eq:mM}) holds trivially by letting $\eta$
  sufficiently small. Hence, the system is \ref{eq:etoile}.  If $C$ is
  admissible at any $\tau > 0$ and if $m(t) \to +\infty$ for $t\to
  +\infty$, then (\ref{eq:mM}) holds and again the system is
 \ref{eq:etoile}.
    
  Zero-class admissible operators are introduced and studied in
  \cite{XuLiuYung}. See also \cite{JacobPartingtonPott:zero-class}
  from which we borrow a concrete example leading to an
 \ref{eq:etoile}-system in which $C$ is not zero-class. 
\begin{example}\label{ex:JPP}
The following example is taken from Jacob, Partington and Pott
\cite[Example 3.9]{JacobPartingtonPott:zero-class}. We shall use
Ingham inequalities to prove that our system is
 \ref{eq:etoile}. Similar ideas could be used in a more general class
of examples.  Consider an undamped wave equation on $[0,1]$ with
Dirichlet boundary conditions and Neumann type observation of the form
 \[  \left\{
    \begin{array}{ll}
      \tfrac{\partial^2}{\partial t^2} z(x, t) =
      \tfrac{\partial^2}{\partial x^2} z(x, t) & \text{for }  x\in
      (0,1), t\ge 0\\
      z(0, t) = z(1, t) = 0 & \text{for } t\ge 0 \\
      z(x, 0) = z_0(x) \quad\text{and}\quad \tfrac\partial{\partial t}
      z(x, 0) = z_1(x) & \text{for }x\in (0,1)\\
      y(t) = \tfrac\partial{\partial x} z(0, t)
    \end{array}\right.
    \]
We rewrite the system as a first order Cauchy problem
 \[  \left\{
    \begin{array}{ll}
      \tfrac{\partial}{\partial t} U = - A U(t), & t \ge 0\\
      U(0) = (z_0, z_1)\\
      C U(x,t) = \tfrac\partial{\partial x} f(0)
      \end{array}\right.
      \]
      where $A = \left( \begin{array}{cc} 0 & - I \\
          -\tfrac{\partial^2}{\partial x^2} & 0 \end{array} \right)$
      and $U = (f, g)$. The latter Cauchy problem is considered on the
      Hilbert space $H = H_0^1(0,1) \times L^2(0,1)$ endowed with the
      norm $\norm{ (f,g) } = \sqrt{\int_0^1 | f' |^2 dx + \int_0^1 | g
        |^2 dx}$. Note that by the Poincar\'e inequality,
      $\sqrt{\int_0^1 | f' |^2 dx}$ defines a norm on $H_0^1(0,1)$
      which is equivalent to the usual one.
      
      It is a standard fact that $-A$ generates a strongly continuous
      semigroup $T(t)$ on $H$. It is easy to see that $A$ has compact
      resolvent and the eigenvalues are $\la_n = - i n \pi, \; n\in
      \ZZ \setminus \{ 0\}$ with normalised eigenfunctions $U_n(x) =
      \left( \frac{\sin(n\pi x)}{in \pi}, \sin(n\pi x) \right)$ which
      form an orthonormal basis of $H$.  Fix $(f,g) \in H$ and denote
      by $\al_n = \langle (f,g), U_n\rangle_H$ (the scalar product in
      $H$). Then
\[
   \norm{ (f,g) }^2 = \sum_{n \in \ZZ,n\not= 0} | \al_n |^2
\] 
and 
\[
  CT(t)(f,g) = \sum_n \al_n e^{in\pi t} CU_n = -i \sum_n \al_n
  e^{in\pi t}.
\]
Using the well known Ingham inequalities (see e.g. \cite[p.
162]{Young:nonharmonic} or \cite[Theorem 4.3]{KomornikLoreti}) we
obtain the following estimates for all $\tau > 2$,
\[ 
    m(\tau)^2 \sum_{n} |\al_n|^2
\le \int_0^\tau  \biggl| \sum_{n} \al_n e^{in\pi t} \biggr|^2\,dt
\le M(\tau)^2 \sum_{n} |\al_n|^2
\]
with 
\[
   m(\tau)^2 \ge  \frac{2 \tau}{\pi} \left( 1 - \frac{4}{\tau^2}
   \right), \, M(\tau)^2 \le  \frac{8 \tau}{\pi} \left( 1 +
     \frac{4}{\tau^2} \right).
\]
This shows that $C$ is admissible at any time $\tau > 0$ and exactly
observable in time $\tau > 2$ with constant $m(\tau) \to + \infty$ as
$\tau \to + \infty$. This shows (\ref{eq:mM}) and hence
the system $(A,C)$ is backward-forward conditioning. \\
In order to see that $C$ is not zero-class, we consider small $\tau >
0$ and $f \in H_0^1(0,1)$ with Fourier coefficients $\al_n$ and note
that
\[ 
   \int_0^\tau  \biggl| \sum_{n} \al_n e^{in\pi t} \biggr|^2\,dt = \norm{
   \chi_{[0,\tau]} f }_2^2,
\]
where $ \chi_{[0,\tau]}$ denote the indicator function of
$[0,\tau]$. From this equality it is clear that
\[ 
   1 = \sup_{ \norm{ f }_2 = 1} \norm{ \chi_{[0,\tau]} f }_2^2 =  \sup_{ \norm{
     f }_2 = 1}  \int_0^\tau  \biggl| \sum_{n} \al_n e^{in\pi t}
   \biggr|^2\,dt = M(\tau)^2.
\]
Therefore, the right hand side does not converge to $0$ as $\tau \to 0$. 
\end{example} 
\begin{remark}
  The above example is a special case of the following situation: let
  $C$ be  admissible in some arbitrary time $\tau>0$ and exactly
  observable in some time $\eta>0$ for a {\em group}
  $U(t)_{t\in\RR}$. Observe that $\norm{ x } = \norm{U(-t) U(t)x} \le
  \norm{U(-t)}\norm{U(t)x}$ whence $\norm{ U(t)x} \ge \norm{ U(-t) 
  }^{-1} \norm{x}$. From
\begin{align*}
  \int_0^{n\eta} \norm{ CU(t)x}^2 \,dt
  & =   \sum_{j=0}^{n-1} \int_0^\eta \norm{ CU(t) U(j\eta)x}^2\,dt \\
  & \ge m(\eta)^2 \biggl(\sum_{j=0}^{n-1} \norm{ U(-j\eta)
  }^{-2}\biggr) \norm{x}^2,
\end{align*}
we then  infer $m(n \eta)\to +\infty$ for $ n \to+\infty$ whenever the sum
in the last expression diverges. This is in particular the case for
bounded groups $U(t)_{t\in\RR}$. By the admissibility of the system
$(A, C)$ one then obtains (\ref{eq:etoile}) by letting $n$
sufficiently large. 
\end{remark}
We thank Hans Zwart for pointing out this remark to us.
\section{Spectral properties of \ref{eq:etoile}-systems }
We consider the same notation $X$, $Y$, $A$, $(T(t))_{t\ge0}$ and $C:
D(A) \to Y$ as in the previous section.  Or aim here is to study
spectral properties of \ref{eq:etoile}-systems.  We will extend some
results which have been proved in \cite{XuLiuYung} in the context of
zero-class operators. We note also that related ideas and results were
obtained previously by Nikolski \cite{Nikolski:policopie-controle} in
the particular case of bounded observation operators $C$ on $X$.
Let us introduce the classical function  $\eps: \RR+ \to \RR^+$
defined by
\[
   \eps(t)  := \inf_{\norm{x}=1}\norm{T(t)x}.
\]
It is clear that $\eps(t)$ is strictly positive for all $t>0$ if this
holds for a single $t_0>0$. Indeed, from
\[
\norm{T(s)} \, \norm{T(t)x} \ge \norm{ T(t+s) x} \ge \eps(t) \norm{
  T(s)x } \ge \eps(t)\eps(s) \norm{x}
\]
one infers that 
\[
\eps(t) \norm{T(s)} \ge \eps(t+s) \ge \eps(t)\eps(s),
\]
for all $t, s \ge 0$. 
For this reason we distinguish the cases that $\eps(t)$
is strictly positive for all $t>0$ of that it vanishes for all $t>0$
and we note this by $\eps(t) >0$ or $\eps(t)=0$ respectively.
\medskip
The following lemma is essentially contained in
\cite{Nikolski:policopie-controle} and \cite{XuLiuYung}.
\begin{lemma}\label{lem:etoile-et-eps}
  If $(A, C)$ is an admissible and exactly
  observable \ref{eq:etoile}-system, then $\eps(t)>0$.
\end{lemma}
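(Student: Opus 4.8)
The plan is to reduce the statement to producing a \emph{single} time at which $\eps$ is positive. Indeed, the estimate $\eps(t)\norm{T(s)} \ge \eps(t+s) \ge \eps(t)\eps(s)$ established just before the lemma shows that $\eps(t)>0$ for all $t>0$ as soon as it holds for one value of $t$. So it suffices to prove $\eps(\eta)>0$ for the particular pair $\eta<\tau$ furnished by the \ref{eq:etoile}-condition. The whole idea is to play the observability lower bound at time $\tau$ against the admissibility upper bound at time $\eta$, and to convert the resulting surplus on the interval $[\eta,\tau]$ into a lower bound for $\norm{T(\eta)x}$ via the semigroup law.

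Concretely, I fix $x\in\DOMAIN(A)$ and split the observability integral as
\[
  \int_0^\tau \norm{CT(t)x}_Y^2\,dt = \int_0^\eta \norm{CT(t)x}_Y^2\,dt + \int_\eta^\tau \norm{CT(t)x}_Y^2\,dt.
\]
Exact observability in time $\tau$ bounds the left-hand side below by $m(\tau)^2\norm{x}^2$, while admissibility in time $\eta$ bounds the first term on the right above by $M(\eta)^2\norm{x}^2$. Subtracting, the tail satisfies
\[
  \int_\eta^\tau \norm{CT(t)x}_Y^2\,dt \ge \bigl(m(\tau)^2 - M(\eta)^2\bigr)\norm{x}^2,
\]
and the factor $m(\tau)^2-M(\eta)^2$ is \emph{strictly positive} precisely because $(A,C)$ is a \ref{eq:etoile}-system, i.e. $M(\eta)<m(\tau)$ by \eqref{eq:mM}. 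This positivity of the observed energy on the tail interval is the crux of the argument.

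Next I substitute $t=\eta+u$ and use $T(\eta+u)=T(u)T(\eta)$ together with $T(\eta)x\in\DOMAIN(A)$ to rewrite the tail as $\int_0^{\tau-\eta}\norm{CT(u)T(\eta)x}_Y^2\,du$, which admissibility in the (finite) time $\tau-\eta$ estimates from above by $M(\tau-\eta)^2\norm{T(\eta)x}^2$. Combining the two inequalities yields
\[
  \bigl(m(\tau)^2-M(\eta)^2\bigr)\norm{x}^2 \le M(\tau-\eta)^2\norm{T(\eta)x}^2 \qquad (x\in\DOMAIN(A)).
\]
Here $M(\tau-\eta)<\infty$ because finite-time admissibility does not depend on the time, and $M(\tau-\eta)>0$ since otherwise the last display would force $x=0$; hence I may divide. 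As $\DOMAIN(A)$ is dense and both sides are continuous in $x$, the inequality $\norm{T(\eta)x}\ge \frac{\sqrt{m(\tau)^2-M(\eta)^2}}{M(\tau-\eta)}\,\norm{x}$ extends to all $x\in X$, so $\eps(\eta)>0$, and therefore $\eps(t)>0$ for every $t>0$.

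As for difficulty, there is no serious analytic obstacle. The only points that require care are the bookkeeping in the change of variables (which needs $T(\eta)x\in\DOMAIN(A)$ and the admissibility constant at the \emph{shorter} time $\tau-\eta$ rather than $\tau$) and the passage from $\DOMAIN(A)$ to $X$ by density. The conceptual content lies entirely in the observation that the \ref{eq:etoile}-inequality leaves a strictly positive amount of observed energy on $[\eta,\tau]$, which admissibility can only account for if $T(\eta)x$ is bounded away from $0$.
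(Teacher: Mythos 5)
Your proof is correct and follows essentially the same route as the paper: split the observability integral at $\eta$, use the semigroup law to rewrite the tail as an admissibility estimate at time $\tau-\eta$ applied to $T(\eta)x$, and deduce $\eps(\eta)>0$ from the strict positivity $m(\tau)^2-M(\eta)^2>0$, which propagates to all $t>0$ by the submultiplicativity estimate for $\eps$. Your added remarks on density and on $M(\tau-\eta)>0$ are harmless refinements of the same argument.
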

\begin{proof} By the definition of \ref{eq:etoile}-system, there exist
  $0 < \eta < \tau$ such that
\[ \delta := m(\tau)^2 - M(\eta)^2 > 0.\]
By the semigroup property, 
\begin{align*}
m(\tau)^2 \norm{x}^2 
& \le \int_0^\tau \norm{ CT(t)x}_Y^2\, dt\\
& = \int_0^\eta \norm{ CT(t)x}_Y^2\, dt 
  + \int_0^{\tau-\eta} \norm{ CT(t)T(\eta)x}_Y^2\, dt\\
& \le M(\eta)^2 \norm{x}^2 +  M(\tau{-}\eta)^2 \norm{T(\eta) x}^2
\end{align*}
which immediately yields 
\[
\norm{T(\eta)x}^2 \ge M(\tau{-}\eta)^{-2}(m(\tau)^2-M(\eta)^2)
\norm{x}^2\ge M(\tau{-}\eta)^{-2} \delta \norm{x}^2.
\]
Therefore, $\eps(\eta) > 0$, and hence $\eps(t) > 0$ for all $t > 0$. 
\end{proof}
\begin{lemma}
   Suppose that  $(A, C)$ is an admissible and exactly observable
 \ref{eq:etoile}-system. Then $T(t)^*$ is injective for one (and thus
  all) $t>0$ if and only if $T(t)$ extends to a group on $X$.
\end{lemma}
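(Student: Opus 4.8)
The plan is to deduce everything from the positivity of $\eps$ established in Lemma~\ref{lem:etoile-et-eps}, combined with a standard duality argument and the characterisation of semigroups that extend to groups. Since $(A,C)$ is an admissible and exactly observable \ref{eq:etoile}-system, that lemma gives $\eps(t)>0$ for all $t>0$, so that $\norm{T(t)x}\ge \eps(t)\norm{x}$ holds for every $x\in X$. Consequently each $T(t)$ is injective and has closed range. The whole statement then reduces to the observation that, for an operator with closed range, surjectivity is equivalent to injectivity of its adjoint, and that a strongly continuous semigroup one of whose operators is boundedly invertible extends to a group.

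First I would recall the relevant functional-analytic facts: for any bounded operator $S$ on $X$ one has $\ker(S^*)=\RANGE(S)^\perp$ and $\overline{\RANGE(S)}={}^\perp(\ker S^*)$. When $\RANGE(S)$ is closed this yields $\RANGE(S)={}^\perp(\ker S^*)$, and the right-hand side equals $X$ precisely when $\ker S^*=\{0\}$. Applying this to $S=T(t)$, whose range is closed by the first step, gives the key equivalence: $T(t)$ is surjective if and only if $T(t)^*$ is injective.

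For the easy implication, suppose $T$ extends to a group. Then each $T(t)$ is invertible with inverse $T(-t)$, so $T(t)^*$ is invertible (with inverse $T(-t)^*$) and in particular injective, for every $t>0$. Conversely, assume $T(t_0)^*$ is injective for some $t_0>0$. By the key equivalence $T(t_0)$ is surjective, and it is already injective by the first step, hence bijective; by the open mapping theorem its inverse is bounded, so $0\in\rho(T(t_0))$. It is a standard fact (see e.g. \cite{EngelNagel}) that a strongly continuous semigroup possessing one boundedly invertible operator extends to a $C_0$-group, which settles the equivalence. The ``one (and thus all)'' clause is then automatic: once the semigroup is a group, $T(t)$ is invertible and hence $T(t)^*$ injective for every $t>0$, so injectivity of $T(t)^*$ for a single $t$ propagates to all $t$.

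The only genuinely delicate ingredient is the closed-range property, but this is exactly what Lemma~\ref{lem:etoile-et-eps} supplies; without it the pre-annihilator identity would only give $\overline{\RANGE(T(t))}=X$, that is, dense rather than surjective range, which would not suffice to produce a group. I therefore expect the substantive work to have been done already in the preceding lemma, with the present argument being a short duality-plus-semigroup-theory deduction.
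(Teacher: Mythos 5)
Your proposal is correct and follows essentially the same route as the paper: both arguments rest on Lemma~\ref{lem:etoile-et-eps} giving $\norm{T(t)x}\ge\eps(t)\norm{x}$, hence injectivity and closed range, so that by duality $T(t)$ is bijective iff $T(t)^*$ is injective, from which the group extension follows. The only cosmetic difference is that the paper proves the $t$-independence of the injectivity of $T(t)^*$ directly from the adjoint semigroup law and then writes down the group explicitly, whereas you obtain the ``one and thus all'' clause a posteriori from the group extension and cite the standard invertibility-implies-group fact --- both steps are sound.
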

\begin{proof}
  We know by Lemma~\ref{lem:etoile-et-eps} that $\eps(t)>0$. This
  implies that $T(t)$ is injective and has closed image for all $t\ge
  0$. Thus, $T(t)$ is bijective if and only if $T(t)^*$ is
  injective. The latter is clearly independent of $t>0$ by the
  semigroup law. Indeed, if $T(t_0)^*$ is injective for some $t_0 >
  0$, so are all $T(s)^*$ for $s<t_0$ since $T(t_0)^* = T(t_0{-}s)^*
  T(s)^*$. If $s > t_0$ then we find $n\in \NN$, $\delta \in [0, t_0[$
  such that $T(s)^* = (T(t_0)^*)^n T(\delta)^*$, and the injectivity
  of $T(s)^*$ follows from that of $T(t_0)^*$ and $T(\delta)^*$.  We
  saw that $T(t)^*$ is injective for one (and thus all) $t>0$ if and
  only $T(t)$ is bijective which in turn by
\[ 
  S(t) := \left\{
    \begin{array}{lcl}
      T(t) & \text{if} & t\ge 0 \\
      T(t)^{-1} & \text{if} & t< 0
    \end{array}\right.
\]
  is equivalent to a group extension of $T(t)$ on $X$.
\end{proof}
For a closed operator $S$ on $X$ recall the notions of point spectrum
\[
\sigmaP(S) = \bigl\{ \la \in \CC \suchthat ker(\la I - S) \not= \{0\}
\bigr\},
\]
the approximate point spectrum 
\[ 
\sigmaA(S) = \bigl\{  \la \in \CC \suchthat \inf_{x\in  \DOMAIN(S),
  \norm{x}=1} \norm{\la x - Sx} = 0 \bigr\}
\]
and the residual  spectrum 
\[
\sigmaR(S) = \bigl\{  \la \in \CC \suchthat \text{range}(\la-S)
\text{ is not dense in } X \bigr\}.
\]
It is easy to see that $\sigmaR(S) = \sigma(S) \backslash
\sigma_A(S)$. Of course, $\sigmaP(S) \subseteq \sigmaA(S)$.
\begin{proposition}\label{prop:prap}
  Let $(A,C)$ be an admissible and exactly observable
  \ref{eq:etoile}-system. Then there exist no approximate point
  spectrum of $A$ with arbitrary large real parts.
  In particular, if $C$ is an admissible zero-class operator and $A$
  has a sequence of approximate point spectrum with arbitrary large
  real parts then $C$ is not exactly observable.
\end{proposition}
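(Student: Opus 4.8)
The plan is to reduce everything to the behaviour of the two observability integrals $\int_0^\eta \norm{CT(t)x}_Y^2\,dt$ and $\int_0^\tau \norm{CT(t)x}_Y^2\,dt$ on (approximate) eigenvectors of $A$. Suppose, for contradiction, that there is a sequence $\la_k \in \sigmaA(A)$ with $r_k := \Re(\la_k) \to +\infty$. If $x$ were a genuine eigenvector, $Ax = \la x$, then $T(t)x = e^{-\la t}x$ and the two integrals would equal $\norm{Cx}_Y^2 \tfrac{1-e^{-2rs}}{2r}$ for $s\in\{\eta,\tau\}$ (with $r = \Re(\la)$). Admissibility at $\eta$ and exact observability at $\tau$ would then force $\tfrac{1-e^{-2r\eta}}{1-e^{-2r\tau}} \le M(\eta)^2/m(\tau)^2$ after the common factor $\norm{Cx}_Y^2$ cancels; letting $r\to+\infty$ the left side tends to $1$ while the right side is $<1$ by (BFC), a contradiction. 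So the whole content is to justify this computation for \emph{approximate} eigenvectors.

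To this end, for a fixed $\la = \la_k$ I would pick $x = x_k \in \DOMAIN(A)$ with $\norm{x}=1$ and $f := (A-\la)x$ of small norm, and set $g(t) := T(t)x - e^{-\la t}x$. A direct differentiation shows $g'(t) = -Ag(t) - e^{-\la t}f$ with $g(0)=0$, so by the variation-of-constants formula $g(t) = -\int_0^t T(t{-}s)e^{-\la s}f\,ds$; moreover $Ag(t) = \la g(t) + (T(t)f - e^{-\la t}f)$. Using that a $C_0$-semigroup is bounded on $[0,\tau]$, both $\norm{g(t)}$ and $\norm{Ag(t)}$ are bounded by a constant (depending on $\la$) times $\norm{f}$, uniformly for $t\in[0,\tau]$. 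Since $C$ is bounded for the graph norm, $\sup_{t\in[0,\tau]} \norm{CT(t)x - e^{-\la t}Cx}_Y = \sup_{t} \norm{Cg(t)}_Y \le \eps_k$, where $\eps_k$ can be made arbitrarily small once $\la_k$ is fixed by shrinking $\norm{f}$.

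The main obstacle is precisely the unboundedness of $C$: the error $Cg(t)$ cannot be controlled uniformly in $k$, because the constants blow up with $|\la_k|$. The resolution is that no uniformity across the sequence is needed; it suffices to choose, for each $k$ separately, the approximate eigenvector $x_k$ so well that the resulting defect $\rho_k := \tau\,\eps_k(2\norm{Cx_k}_Y + \eps_k)$ satisfies $\rho_k \le 1/k$ (this is possible since, for $\norm{f}\le 1$, $\norm{Cx_k}_Y \le \norm{C}_{\DOMAIN(A)\to Y}(2+|\la_k|)$ stays bounded for fixed $k$). With $c_k := Cx_k$, comparing $\int_0^s\norm{CT(t)x_k}_Y^2\,dt$ with $\norm{c_k}_Y^2 \tfrac{1-e^{-2r_k s}}{2r_k}$ up to the error $\rho_k$, and inserting the admissibility bound at $\eta$ and the observability bound at $\tau$, yields
\[
  \norm{c_k}_Y^2\,\tfrac{1-e^{-2r_k\eta}}{2r_k} \le M(\eta)^2 + \rho_k, \qquad
  \norm{c_k}_Y^2\,\tfrac{1-e^{-2r_k\tau}}{2r_k} \ge m(\tau)^2 - \rho_k .
\]
For $k$ large the right member of the second inequality is positive, so $c_k \ne 0$ and dividing gives $\tfrac{1-e^{-2r_k\eta}}{1-e^{-2r_k\tau}} \le \tfrac{M(\eta)^2+\rho_k}{m(\tau)^2-\rho_k}$; passing to the limit $k\to\infty$ the left side tends to $1$ and the right side to $M(\eta)^2/m(\tau)^2 < 1$, the desired contradiction. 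Finally, the ``in particular'' statement is immediate: a zero-class admissible operator that is exactly observable is a (BFC)-system (take $\eta$ small, as remarked after the definition), so the first part applies and, contrapositively, such a $C$ cannot be exactly observable when $A$ has approximate point spectrum of unbounded real part.
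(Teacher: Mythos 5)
Your proof is correct, but it follows a genuinely different route from the paper's. The paper's argument is short and modular: it invokes the spectral inclusion $\exp(-t\,\sigmaA(A)) \subseteq \sigmaA(T(t))$ (cited from Engel--Nagel) to transfer the approximate eigenvalues $\la_n$ to the semigroup, which gives $\inf_{\norm{x}=1} \norm{T(t)x - e^{-t\la_n}x} = 0$ and hence $\eps(t) \le e^{-t\Re(\la_n)} \to 0$ for the function $\eps(t) = \inf_{\norm{x}=1}\norm{T(t)x}$; this contradicts Lemma~\ref{lem:etoile-et-eps}, which says that \ref{eq:etoile} forces $\eps(t)>0$. In that route the unboundedness of $C$ never has to be confronted at all: the observation operator is absorbed once and for all into Lemma~\ref{lem:etoile-et-eps}, and the contradiction lives purely at the level of the semigroup. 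You instead reprove everything from scratch at the level of the output integrals: you propagate an approximate eigenvector via Duhamel, control $Cg(t)$ through graph-norm bounds on $g(t)$ and $Ag(t)$ (correctly identifying that the constants blow up with $|\la_k|$ and that only per-$k$ smallness of the defect is needed, since $\norm{Cx_k}_Y$ stays bounded for fixed $k$), and compare $\int_0^\eta$ and $\int_0^\tau$ of $\norm{CT(t)x_k}_Y^2$ with the exact exponential integrals, obtaining $M(\eta) \ge m(\tau)$ in the limit, against \ref{eq:etoile}. Your version is longer but self-contained (no spectral inclusion theorem, no Lemma~\ref{lem:etoile-et-eps}) and quantitatively more informative: it exhibits concretely that a mode decaying at rate $r_k \to \infty$ places asymptotically all of its output mass in $[0,\eta]$, which is exactly the mechanism by which \ref{eq:etoile} fails. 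The details check out --- the identities $g' = -Ag - e^{-\la t}f$ and $Ag(t) = \la g(t) + T(t)f - e^{-\la t}f$, the division step where $m(\tau)^2 - \rho_k > 0$ forces $c_k \ne 0$, and the zero-class reduction in the ``in particular'' part (identical to the paper's remark following (\ref{eq:mM})) --- up to the harmless point that you should restrict to $k$ large enough that $\Re(\la_k) \ge 0$, so that $e^{-\Re(\la_k)t} \le 1$ in your error bounds.
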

\begin{proof}
  Recall that $\exp( - t \sigmaA(A) ) \subseteq \sigmaA(T(t))$ (see
  \cite[p. 276]{EngelNagel}, note that $-A$ is the generator). If we
  find a sequence $\la_n \in \sigmaA(A)$ with $\Re(\la_n) \to +
  \infty$, then $e^{-t \la_n} \in \sigmaA(T(t))$.  Hence,
  $\inf_{\norm{x} = 1} \norm{ T(t)x - e^{-t \la_n} x } = 0$. By
\[
  \eps(t) \norm{x} \le \norm{ T(t)x} 
\le \norm{ T(t)x - e^{-t \la_n} x } + e^{-t \Re(\la_n)} \norm{x}
\]
we get $\eps(t) = 0$ which is incompatible with exact observability by
Lemma~\ref{lem:etoile-et-eps}.
\end{proof}
Since $-A$ is the generator of a strongly continuous semigroup,
$\sigma(A)$ is contained in a right-half plane.  On the other hand, it
is well known that the boundary of the spectrum $\partial \sigma(A)$
is contained in $\sigmaA(A)$.  We obtain from the previous proposition
that $\Re (\partial \sigma(A))$ is bounded, i.e., $\partial \sigma(A)$ is
contained in a vertical strip. Thus 
\begin{corollary}\label{cor:approx-point-spec}
 Let $(A,C)$ be an admissible \ref{eq:etoile}-system.  Then 
 \[
    \Re (\partial \sigma(A)) := \{ \Re (\la) \suchthat  \la
    \in \partial \sigma(A) \}
\]
 is bounded.
\end{corollary}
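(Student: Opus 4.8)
The plan is to combine three ingredients that are already assembled in the discussion preceding the statement: the half-plane location of $\sigma(A)$ coming from the generator property, the boundedness from above of $\Re(\sigmaA(A))$ supplied by Proposition~\ref{prop:prap}, and the classical inclusion $\partial\sigma(A)\subseteq\sigmaA(A)$. Note first that a \ref{eq:etoile}-system is by definition admissible and exactly observable, so all the hypotheses of Proposition~\ref{prop:prap} are met and no separate observability assumption needs to be added.

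First I would record the lower bound. Since $-A$ generates a strongly continuous semigroup, its growth bound $\om_0\in\RR$ gives $\sigma(-A)\subseteq\{\mu\suchthat \Re(\mu)\le\om_0\}$, hence $\sigma(A)\subseteq\{\la\suchthat \Re(\la)\ge-\om_0\}$. In particular $\Re(\la)\ge-\om_0$ for every $\la\in\partial\sigma(A)\subseteq\sigma(A)$, so $\Re(\partial\sigma(A))$ is bounded below.

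Next I would obtain the upper bound. Proposition~\ref{prop:prap} yields that $\sigmaA(A)$ contains no sequence whose real parts tend to $+\infty$, so $M:=\sup\{\Re(\la)\suchthat \la\in\sigmaA(A)\}<\infty$. I then invoke the standard fact (cited in the paragraph above the statement) that for a closed operator the topological boundary of the spectrum is contained in the approximate point spectrum, $\partial\sigma(A)\subseteq\sigmaA(A)$. Together these give $\Re(\la)\le M$ for every $\la\in\partial\sigma(A)$.

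Combining the two estimates yields $-\om_0\le\Re(\la)\le M$ for all $\la\in\partial\sigma(A)$, which is precisely the asserted boundedness of $\Re(\partial\sigma(A))$. I expect no genuine obstacle here: the only point requiring care is the correct citation of the inclusion $\partial\sigma(A)\subseteq\sigmaA(A)$, since all the substantive work was already carried out in Proposition~\ref{prop:prap}.
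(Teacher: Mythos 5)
Your proposal is correct and takes essentially the same route as the paper, which proves the corollary in the preceding paragraph by combining exactly your three ingredients: $\sigma(A)$ lies in a right half-plane since $-A$ generates a strongly continuous semigroup, $\partial\sigma(A)\subseteq\sigma_A(A)$, and Proposition~\ref{prop:prap} bounds $\Re(\sigma_A(A))$ from above. Your remark that the definition of a \ref{eq:etoile}-system already includes exact observability, so Proposition~\ref{prop:prap} applies without extra hypotheses, is also consistent with the paper's usage.
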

The following lemma is  known. 
\begin{lemma}\label{lem:well-known}
  Let $S \in \BOUNDED(X)$ satisfy $\norm{S x} \ge \ga \norm{x}$ for
  some $\ga>0$ and all $x\in X$ and assume $0 \in \sigma(S)$. Then
  there exists $\delta > 0$ such that $B(0, \delta) \subseteq
  \sigmaR(S)$.
\end{lemma}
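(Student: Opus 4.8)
The plan is to combine two elementary observations: a quantitative perturbation of the lower bound, and the classical inclusion $\partial\sigma(S)\subseteq\sigmaA(S)$ that was already recalled above. First I would note that for every $\la$ with $\abs{\la}<\ga$ one has
\[ \norm{(\la I - S)x} \ge \norm{Sx} - \abs{\la}\,\norm{x} \ge (\ga-\abs{\la})\norm{x} \qquad (x\in X), \]
so that $\la I-S$ is bounded below, hence injective with closed range. In particular $\inf_{\norm{x}=1}\norm{\la x - Sx}\ge \ga-\abs{\la}>0$, which says precisely that $\la\notin\sigmaA(S)$. Using the identity $\sigmaR(S)=\sigma(S)\weg\sigmaA(S)$ recorded above, this already yields
\[ \sigma(S)\cap B(0,\ga)\subseteq\sigmaR(S), \]
i.e. every spectral value of modulus $<\ga$ is automatically a residual spectral value.

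It remains to produce a genuine ball around $0$ that lies inside the spectrum; this is the step I expect to be the main (though small) obstacle, since so far I only control those points that happen to lie in $\sigma(S)$. Here I would invoke the hypothesis $0\in\sigma(S)$ together with the well-known fact $\partial\sigma(S)\subseteq\sigmaA(S)$. Since the computation above gives $0\notin\sigmaA(S)$ (because $\norm{Sx}\ge\ga\norm{x}$), we get $0\notin\partial\sigma(S)$. As $\sigma(S)$ is closed and $0$ belongs to it without belonging to its boundary, $0$ must be an interior point of $\sigma(S)$; hence there is $\delta_0>0$ with $B(0,\delta_0)\subseteq\sigma(S)$.

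Finally I would set $\delta:=\min(\delta_0,\ga)$. Then $B(0,\delta)\subseteq\sigma(S)\cap B(0,\ga)$, and the inclusion from the first step gives $B(0,\delta)\subseteq\sigmaR(S)$, which is the assertion. The only delicate point is the passage from ``spectral points near $0$ are residual'' to ``a whole neighbourhood of $0$ is residual spectrum''; it is resolved entirely by the boundary-of-spectrum inclusion, so that no Fredholm-theoretic stability of the codimension of the range of $\la I-S$ is actually needed.
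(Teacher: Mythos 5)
Your argument is correct and complete. Note that the paper itself offers no proof of this lemma at all (it is merely stated with ``The following lemma is known''), so there is no internal argument to compare against; your proposal fills that gap with a self-contained and elementary proof. Both steps check out: the lower bound $\norm{(\la I-S)x}\ge(\ga-\abs{\la})\norm{x}$ shows $B(0,\ga)\cap\sigmaA(S)=\emptyset$, and combined with the inclusion $\sigma(S)\weg\sigmaA(S)\subseteq\sigmaR(S)$ (this is the direction of the paper's identity that is actually valid and is all you need: bounded below with closed, non-dense range) it localises the problem to showing $0$ is interior to $\sigma(S)$, which your use of $\partial\sigma(S)\subseteq\sigmaA(S)$ settles, since a point of a closed set not on its boundary is interior. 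Two remarks. First, your approach is genuinely more elementary than the textbook route one would otherwise expect here (stability of the defect number $\operatorname{codim}\overline{\RANGE(\la I-S)}$ under small perturbations of a semi-Fredholm operator, i.e.\ a punctured-neighbourhood argument); you correctly observe that no such Fredholm-theoretic input is needed. Second, your argument actually proves more than you claim: since $B(0,\ga)\cap\partial\sigma(S)=\emptyset$, the set $B(0,\ga)\cap\sigma(S)$ is both relatively open and relatively closed in the connected set $B(0,\ga)$, and it contains $0$; hence $B(0,\ga)\subseteq\sigma(S)$ outright, so you may take $\delta=\ga$ and dispense with $\delta_0$ and the minimum altogether.
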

The main result in \cite{XuLiuYung} which states that if $C$ is
zero-class admissible and $\sigmaR(A)$ is empty, then $T(t)$ extends
to a group. The next propositions  extend  this result.
\begin{proposition}\label{prop:commeXLY} 
  Let $(A,C)$ be an admissible \ref{eq:etoile}-system.  If $\Re
  (\sigmaR(A)) := \{ \Re \lambda \suchthat \lambda \in \sigmaR(A) \}$
  is bounded, then $(T(t))_{t\ge0}$ extends to a group on $X$.
\end{proposition}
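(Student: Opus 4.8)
The plan is to argue by contradiction and to reduce the group property to a surjectivity statement for a single operator. By Lemma~\ref{lem:etoile-et-eps} we have $\eps(t)>0$, so every $T(t)$ is injective and has closed range; as in the preceding lemma characterising group extensions, $(T(t))_{t\ge0}$ then extends to a group precisely when each $T(t)$ is in addition surjective, i.e.\ bijective, and by the semigroup law this holds for one $t>0$ if and only if it holds for all. Thus it suffices to show, under the hypothesis that $\Re(\sigmaR(A))$ is bounded, that some (hence every) $T(t_0)$ is surjective.

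Suppose, for contradiction, that this fails, and fix any $t_0>0$, so that $T(t_0)$ is not surjective. Since $T(t_0)$ is injective with closed range, its range is then a proper closed subspace of $X$, hence not dense, so that $0\in\sigmaR(T(t_0))$ and in particular $0\in\sigma(T(t_0))$. Because $\norm{T(t_0)x}\ge\eps(t_0)\norm{x}$ with $\eps(t_0)>0$, Lemma~\ref{lem:well-known} applies to $S=T(t_0)$ and yields a radius $\delta>0$ with $B(0,\delta)\subseteq\sigmaR(T(t_0))$.

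It remains to transport this ball of residual spectral values of $T(t_0)$ back to the generator. Here I would invoke the spectral mapping theorem for the residual spectrum, which is valid for arbitrary strongly continuous semigroups (unlike the full spectrum; see \cite{EngelNagel}) and which, in the present sign convention where the generator is $-A$ and $T(t_0)=e^{-t_0A}$, reads
\[
  \sigmaR(T(t_0))\setminus\{0\}=e^{-t_0\sigmaR(A)}.
\]
Hence for every $\mu\in B(0,\delta)$ with $\mu\ne0$ there is some $\la\in\sigmaR(A)$ with $e^{-t_0\la}=\mu$, so that $\Re(\la)=-\tfrac1{t_0}\log\abs{\mu}$. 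Letting $\abs{\mu}\to0$ within $B(0,\delta)$ produces elements $\la\in\sigmaR(A)$ with $\Re(\la)\to+\infty$, contradicting the boundedness of $\Re(\sigmaR(A))$. Consequently every $T(t_0)$ is surjective, and $(T(t))_{t\ge0}$ extends to a group.

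The main obstacle is justifying the identity $\sigmaR(T(t_0))\setminus\{0\}=e^{-t_0\sigmaR(A)}$: one must make sure to quote exactly the spectral mapping theorem for the residual (and point) spectrum, which holds without any analyticity or eventual norm-continuity assumption, rather than the full spectral mapping theorem, which does not. If one prefers to avoid quoting this version, the same conclusion can be reached through the dual operator, using the Hahn--Banach/annihilator identity $\sigmaR(S)=\sigmaP(S')$ for bounded $S$ together with the spectral mapping theorem for the point spectrum applied to the adjoint semigroup $(T(t)')$ with generator $-A'$; the care needed there lies in the fact that $(T(t)')$ is in general only weak$^*$-continuous.
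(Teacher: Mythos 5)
Your proof is correct and follows essentially the same route as the paper's: Lemma~\ref{lem:etoile-et-eps} gives $\eps(t)>0$, Lemma~\ref{lem:well-known} produces a ball $B(0,\delta)\subseteq\sigmaR(T(t_0))$ when $T(t_0)$ fails to be invertible, and the residual spectral mapping theorem $\sigmaR(T(t))\setminus\{0\}=\exp(-t\,\sigmaR(A))$ (the paper cites \cite[p.~276]{EngelNagel} for exactly this) yields elements of $\sigmaR(A)$ with arbitrarily large real part, contradicting the hypothesis. Your additional remarks --- spelling out why bijectivity of one $T(t_0)$ suffices for the group extension, and the alternative via $\sigmaR(S)=\sigmaP(S')$ --- are sound but go beyond what the paper's proof records.
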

\begin{proof}
  We know by Lemma~\ref{lem:etoile-et-eps} that $\eps(t) >0$. If
  $T(t)$ was not boundedly invertible for some $t>0$, then $0 \in
  \sigma(T(t))$. By Lemma~\ref{lem:well-known}, there exists $\delta_t
  > 0$ such that $B(0, \delta_t) \subseteq \sigmaR(T(t))$. Since
  $\sigmaR( T(t)) \backslash \{0\} = \exp( - t \sigmaR(A) )$ (see
  \cite[p. 276]{EngelNagel}) we obtain
    
\[
B(0, \delta_t) \backslash \{0\} \subseteq  \exp( - t \sigmaR(A) ),
\]
Therefore, there exists a real sequence $(\la_n) \in \sigmaR(A)$ such
that $\Re \la_n \to + \infty$. This contradicts the assumption.
\end{proof}
\begin{proposition} \label{prop:prohalf} Assume that $(A,C)$ is an
  admissible \ref{eq:etoile}-system.  If $\sigma(A)$ does not contain
  a half-plane then $(T(t))_{t\ge0}$ extends to a group on $X$.
\end{proposition}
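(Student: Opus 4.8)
The plan is to reduce everything to Proposition~\ref{prop:commeXLY}: it suffices to show that the hypothesis ``$\sigma(A)$ does not contain a half-plane'' forces $\Re(\sigmaR(A))$ to be bounded, after which the group extension follows at once. In fact I expect to prove the stronger statement that the \emph{whole} spectrum $\sigma(A)$ lies in a vertical strip, which trivially gives boundedness of $\Re(\sigmaR(A))$ since $\sigmaR(A)\subseteq\sigma(A)$.

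First I would invoke Corollary~\ref{cor:approx-point-spec}, which tells us that $\partial\sigma(A)$ is contained in a vertical strip, say $\partial\sigma(A)\subseteq\{z\in\CC\suchthat a\le\Re z\le b\}$ for suitable reals $a\le b$. Recall also that, $-A$ being a semigroup generator, $\sigma(A)$ is contained in a right half-plane $\{\Re z\ge -\om_0\}$, so it is automatically bounded to the left; the only issue is to bound it from the right.

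The main step is a connectedness argument applied to the open half-plane $H:=\{z\in\CC\suchthat \Re z> b\}$. Since $H$ is disjoint from $\partial\sigma(A)$, every point of $H\cap\sigma(A)$ is an interior point of $\sigma(A)$, so that $H\cap\sigma(A)$ and $H\cap\rho(A)$ are both relatively open in $H$ and partition it. As $H$ is connected, one of the two is empty: either $H\subseteq\sigma(A)$ or $H\subseteq\rho(A)$. The first alternative would exhibit a half-plane inside $\sigma(A)$, contradicting the hypothesis; hence $H\subseteq\rho(A)$ and therefore $\sigma(A)\subseteq\{\Re z\le b\}$. Combined with the left bound this confines $\sigma(A)$ to the vertical strip $\{-\om_0\le\Re z\le b\}$.

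With $\Re(\sigma(A))$ bounded, a fortiori $\Re(\sigmaR(A))$ is bounded, and Proposition~\ref{prop:commeXLY} yields the desired group extension. I expect the only delicate point to be the clean justification of the dichotomy for $H$ — that a connected open set meeting $\sigma(A)$ but avoiding $\partial\sigma(A)$ must lie entirely in $\sigma(A)$ — together with the correct use of the ``no half-plane'' hypothesis to discard the wrong case; everything else is bookkeeping with the spectral inclusions already recorded above.
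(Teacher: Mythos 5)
Your proposal is correct and follows essentially the same route as the paper: the paper's proof likewise deduces from Corollary~\ref{cor:approx-point-spec} that $\sigma(A)$ is either contained in a vertical strip or contains a half-plane, and then applies Proposition~\ref{prop:commeXLY}. The only difference is that the paper asserts this dichotomy without justification, whereas you make it rigorous via the connectedness/partition argument on the half-plane $\{\Re z > b\}$ --- a detail worth spelling out, and you do it correctly.
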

\begin{proof} By Corollary~\ref{cor:approx-point-spec} we see that
  $\sigma(A)$ is either contained in vertical strip or contains a
  half-plane. Now we apply Proposition~\ref{prop:commeXLY} to
  conclude.
\end{proof}
Considering the right shift semigroup $T(t)$  on $L^2(\RR_+)$ with the
identity observation $C=\text{Id}$ provides an example of a
\ref{eq:etoile}-system (even a zero-class admissible one, see
\cite[Remark 3.1]{XuLiuYung}) for which no group extension is
possible. In this example, it is not difficult to check the spectrum of $A$ 
satisfies $\sigma(A) = \sigmaR(A) = \CC^+$ (the right half-plane). 
 This shows that the spectral condition in
Proposition~\ref{prop:prohalf} cannot be omitted. 
\smallskip
Assume that $(A,C)$ is an admissible \ref{eq:etoile}-system. If $T(t)$
is analytic, differentiable or merely eventually continuous then by
\cite[p. 113]{EngelNagel} $\sigma(A)$ does not contain a
half-plane. Thus, we conclude by Proposition~\ref{prop:prohalf} that
$(T(t))_{t\ge0}$ extends to group on $X$. 
\begin{proposition}
  Assume that $(A,C)$ is an admissible \ref{eq:etoile}-system. If
  $T(t)$ is compact for some $t > 0$, then $X$ has finite dimension.
\end{proposition}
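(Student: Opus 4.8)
The plan is to exploit the tension between compactness and the lower bound $\eps(t)>0$ that the \ref{eq:etoile}-property forces on the semigroup.

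First I would recall that, since $(A,C)$ is an admissible and exactly observable \ref{eq:etoile}-system, Lemma~\ref{lem:etoile-et-eps} yields $\eps(t)>0$ for all $t>0$; in particular $\eps(t_0)>0$ at the value $t_0>0$ for which $T(t_0)$ is compact. Hence $\norm{T(t_0)x} \ge \eps(t_0)\norm{x}$ for all $x\in X$, so $T(t_0)$ is injective and bounded below. As already observed in the proofs above, being bounded below forces the range $\RANGE(T(t_0))$ to be closed, so $T(t_0)$ is a Banach-space isomorphism from $X$ onto the closed subspace $R:=\RANGE(T(t_0))$, and its inverse $T(t_0)^{-1}\colon R \to X$ is bounded, with $\norm{T(t_0)^{-1}}\le \eps(t_0)^{-1}$.

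Next I would combine this isomorphism with the compactness hypothesis. Regarding $T(t_0)$ as a map into $R$, it is still compact (its image of the unit ball is relatively compact in $X$ and contained in the closed set $R$). The identity on $X$ factors as $I_X = T(t_0)^{-1}\circ T(t_0)$, a composition of the compact operator $T(t_0)\colon X \to R$ with the bounded operator $T(t_0)^{-1}\colon R \to X$. Since the composition of a compact operator with a bounded one is compact, $I_X$ is compact, and by the Riesz theorem a Banach space whose identity operator is compact is finite-dimensional. This gives the claim.

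The argument is short, and the only point requiring care is that the lower bound $\eps(t_0)>0$ genuinely provides both injectivity and a closed range, so that $T(t_0)^{-1}$ exists and is bounded on $R$; this is exactly what Lemma~\ref{lem:etoile-et-eps} secures, and it is the single place where the \ref{eq:etoile}-hypothesis enters. Everything else is standard functional analysis, so I expect no serious obstacle. One could equally phrase the final step as follows: the closed unit ball of $X$ equals $T(t_0)^{-1}$ applied to the relatively compact set $T(t_0)$ of that ball, hence is itself relatively compact, which forces $\dim X < \infty$.
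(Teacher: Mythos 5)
Your proof is correct, and it follows a genuinely different route from the paper's. The paper argues through spectral theory: compactness of $T(t)$ forces $\sigma(A)=\sigmaP(A)$ to be discrete, Proposition~\ref{prop:prap} then shows $\sigma(A)$ is bounded, Proposition~\ref{prop:commeXLY} upgrades this to a group extension of the semigroup, and finally $I=T(t)T(-t)$ is compact, so $X$ is finite dimensional by Riesz's theorem. You bypass all of this spectral machinery: Lemma~\ref{lem:etoile-et-eps} alone gives $\norm{T(t_0)x}\ge \eps(t_0)\norm{x}$, hence $T(t_0)$ is an isomorphism onto its closed range $\RANGE(T(t_0))$, and writing $I_X=T(t_0)^{-1}\circ T(t_0)$ exhibits the identity as a composition of a compact operator with a bounded one; Riesz's theorem again concludes. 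Your approach is shorter and more self-contained, needing only the lemma rather than the two propositions; the paper's detour buys the stronger intermediate conclusion that $(T(t))_{t\ge0}$ extends to a group, which is of independent interest and fits the theme of that section, but as a proof of the bare statement yours is the more economical. Two small wording points: in your final rephrasing, ``the relatively compact set $T(t_0)$ of that ball'' should read ``the image $T(t_0)(B_X)$ of the ball'', and when you apply $T(t_0)^{-1}$ to the closure of this image you should note (as you did earlier in the argument) that this closure lies in $\RANGE(T(t_0))$ precisely because the range is closed.
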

\begin{proof}
  If $T(t)$ is compact for some $t > 0$ then $\sigma(A) = \sigmaP(A)$
  is discrete. It follows from Proposition~\ref{prop:prap} that
  $\sigma(A)$ is bounded. We conclude by
  Proposition~\ref{prop:commeXLY} that $(T(t))_{t\ge 0}$ extends to a
  group on $X$. Thus, $I = T(t) T(-t) $ is compact on $X$ and
  therefore $X$ has finite dimension.
\end{proof}
\section{Sufficient conditions for exact observability}
Our aim in this section is to derive conditions on $C$ and $A$ which
imply exact observability.  Our condition reads as follows
\begin{equation}\label{eq:Rbeta}
\norm{CA^{-(1- \beta)} x}_Y \ge \delta  \norm{x}
\end{equation}
for all $x\in \DOMAIN(A) \cap \RANGE(A)$. Here
$\beta \in (0,1) $ and $\delta > 0$ are constants.  Since we shall
assume that $A$ is injective, it may be  convenient to understand
(\ref{eq:Rbeta}) in the sense
\[ \norm{CA^{-1} x}_Y \ge \delta \norm{ A^{-\beta}x}.\] Of course,
$\RANGE(A) \cap \RANGE(A^\beta) = \RANGE(A)$. 
In the sequel we need some basic properties of the $H^\infty$
functional calculus for sectorial operators. This functional calculus
goes back to the work of \McIntosh \cite{McIntosh:H-infty-calc}.  More
recent publications of the meanwhile rich theory can be found in
\cite{Haase:Buch} or \cite{KunstmannWeis:Levico} and the references
given therein. We briefly sketch the needed results and definitions.
\begin{definition}
  We denote by $\SECT{\omega}$ the open sector $\{ z\in \CC^*
  \suchthat |\arg(z)|<\omega\}$ and by $\CSECT{\omega}$ the closure of
  $\SECT{\om}$ in $\CC$.  
  We call a closed operator $A$ on $X$ {\em sectorial of angle
    $\omega$} if $A$ is densely defined having its spectrum in
  $\CSECT{\omega}$ such that $\la R(\la, A) := \la (\la{-}A)^{-1}$
  of $A$ is uniformly bounded on the complement of each  strictly larger
  sectors $\SECT{\theta}$, $\theta>\omega$.
\end{definition}
Notice that if $-A$ generates a bounded semigroup $T(t)_{t\ge0}$, then
$A$ is sectorial of angle $\pihalbe$ by the Hille-Yosida
theorem. Moreover, the semigroup is (bounded) analytic if and only if
$A$ is sectorial of angle $<\pihalbe$.
\smallskip
Let $H^\infty(\SECT{\omega})$ denote the holomorphic and bounded
functions on $\SECT{\omega}$ and Let $H^\infty(\CSECT{\omega})$ denote
the holomorphic and bounded functions on $\SECT{\omega}$ that are
continuous and bounded on $\CSECT{\omega}$. We further consider the
ideal $H_0^\infty(\SECT{\omega})$ (respectively
$H_0^\infty(\CSECT{\omega})$) of all functions $f \in
H^\infty(\SECT{\omega})$ (respectively $H_0^\infty(\CSECT{\omega})$)
that allow an estimate $|f(z)| \le M \max(|z|^\eps, |z|^{-\eps})$.
The class of $H_0^\infty(\SECT{\omega})$ functions admits a natural
functional calculus for sectorial operators $A$ of angle $\om$.
Indeed, if $f \in H_0^\infty(\SECT{\theta})$ for some $\theta>\om$ and
if $\Ga = \partial \SECT{\theta}$ denotes the orientated path with
strictly decreasing imaginary part, the Cauchy integral
\[
   f(A) := \frac1 {2\pi i} \int_\Ga f(\la)\, R(\la, A)\,d\la
\]
converges absolutely in norm and defines therefore a bounded operator
$f(A)$.  If $A$ has, say, dense range, one obtains then a functional
calculus for all functions $f \in H_0^\infty(\SECT{\theta})$. 
\begin{definition} 
  We say that $A$ admits a {\em bounded $H^\infty(\SECT{\om})$
  (respectively $H^\infty(\CSECT{\om})${\rm )} functional calculus}
  if $f(A)$ is a bounded operator on $X$ and there exists a constant
  $M$ such that
  \begin{equation}  \label{eq:bounded-H-infty-calc}
  \norm{ f(A) } \le M \norm{f}_\infty
\end{equation}
  for all $f \in H^\infty(\SECT{\theta})$ and
  $\theta>\om$ (respectively for all $f\in
  H^\infty(\CSECT{\om})${\rm )}.
\end{definition}
Let us mention that by an approximation argument, if
(\ref{eq:bounded-H-infty-calc}) holds for all $f \in
H_0^\infty(\SECT{\theta})$ then it holds for all $f \in
H^\infty(\SECT{\theta})$. Therefore, it is enough to check the
validity of (\ref{eq:bounded-H-infty-calc}) for all $f \in
H_0^\infty(\SECT{\theta})$ to obtain bounded $H^\infty(\SECT{\om})$
functional calculus. Similarly, if (\ref{eq:bounded-H-infty-calc})
holds for for all $ f \in H_0^\infty(\CSECT{\om})$ we obtain a
$H^\infty(\CSECT{\om})$ functional calculus.
\medskip
We say that $A$ admits upper square function estimates on
$\SECT{\theta}$ if there is a constant $M>0$ such that
  \begin{equation}\label{eq:abstract-upper-eq-f-est}
     \forall x \in X:\qquad  \int_0^\infty \norm{ \varphi(tA)x }^2   
     \,\tfrac{dt}t \le M^2   \norm{x}^2  
   \end{equation}
   for all $\varphi \in H_0^\infty(\SECT{\theta})$. In the same way we
   speak of lower square function estimates on $\SECT{\theta}$ if one
   has
\begin{equation}\label{eq:abstract-lower-eq-f-est}
\forall x \in X:\qquad   \norm{x}^2  
\le K^2 \int_0^\infty \norm{ \varphi(tA)x }^2   \,\tfrac{dt}t
\end{equation}
If $X = H$ is a Hilbert space, then upper square function estimates
for $A$ and for $A^*$ for $H_0^\infty(\SECT{\mu})$ functions for all
$\mu>\om$ are equivalent to a bounded $H_0^\infty(\SECT{\mu})$
functional calculus for all $\mu>\om$.  Moreover, by an approximate
identity argument and a duality estimate, lower square function
estimates for $A$ follow from upper estimate of its adjoint $A^*$.  We
will go into some details on the Hilbert space theory of the functional
calculus in the last section and refer at this point to
\cite{McIntosh:H-infty-calc,CowlingMcIntoshDoustYagi} for more
details.
\bigskip
Before stating our first result of this section we discuss the
following estimate for $A$
\begin{equation}\label{eq:concrete-lower-sq-f-est}\tag{$SQ_\beta$}
       \norm{x}^2  
 \le K^2 \int_0^\infty \norm{ (tA)^{-\beta} (T(2t^{2\beta})-T(t^{2\beta})) x }^2 
  \,\tfrac{dt}t
\end{equation}
that we will need to formulate the theorem. Here, $K$ is a positive
constant and $\beta \in (0,1)$. In case $\beta=\onehalf$, letting
$\varphi(z) := z^{-\onehalf} (e^{-2z} - e^{-z} )$, this corresponds to
a lower square function estimate (\ref{eq:abstract-lower-eq-f-est})
for $\varphi\in H_0^\infty(\CSECT{\pihalbe})$. As mentioned above,
(\ref{eq:abstract-lower-eq-f-est}) follows from $H^\infty-$functional
calculus when $X$ is a Hilbert space.  We will discuss again this in
the next section.
Assume that $-A$ is the generator of bounded strongly continuous
semigroup on $X$ and is injective. If we let $\varphi(z) := z^{-\beta}
(e^{-2z} - e^{-z} )$, then (\ref{eq:concrete-lower-sq-f-est}) can be
seen as
\begin{align*}
     \norm{x}^2  
& \le \; K^2 \int_0^\infty  \bignorm{  (tA)^{-\beta} 
    (T(2t^{2\beta})-T(t^{2\beta})) x }^2\, \frac{dt}t\\
&= \; K^2 \int_0^\infty \bignorm{  t^{2\beta^2-\beta} 
    \varphi(t^{2\beta} A)  x }^2\,  \tfrac{dt}t \\
(\text{letting } s=t^{2\beta}) &= \;\frac{K^2}{2\beta}  \int_0^\infty  \bignorm{
      s^{-(\onehalf -\beta)} \varphi(s A) x }^2\, \frac{ds}s,
\end{align*}
i.e. as 'weighted' lower square function estimate
for $\varphi \in H_0^\infty(\overline{\SECT{\pihalbe}})$.
 
By \cite[Theorem 6.4.6]{Haase:Buch}, the completion
$X_{\onehalf -\beta, 2}$ of $X$ with respect to the seminorm
\[
[x]_\beta =   \left( \int_0^\infty
  \bignorm{ s^{-(\onehalf -\beta)} \varphi(s A) x }^2\,
  \frac{ds}s\right)^{\onehalf}
\]
is independent of the choice of
$\varphi$ and coincides (with equivalent norms) with the real
interpolation space:
 \begin{equation}\label{eq:realinterp}
 \left(\dot{X}_{-1}(A), \dot{X}_1(A) \right)_{\frac{3}{4}
   -\frac{\beta}{2}, 2} 
 = X_{\onehalf -\beta, 2}.
 \end{equation}
 Here $\dot{X}_{-1} (A)$ is the completion of $\RANGE(A)$ with respect
 to $\norm{A^{-1}x}$ and $\dot{X}_1(A)$ is the completion of
 $\DOMAIN(A)$ with respect to $\norm{A x}$.  From
 (\ref{eq:realinterp}), it follows that
 (\ref{eq:concrete-lower-sq-f-est}) is equivalent to the continuous
 embedding
 \begin{equation}\label{con-emb}
 \left(\dot{X}_{-1}(A), \dot{X}_1(A) \right)_{\frac{3}{4}
   -\frac{\beta}{2}, 2} 
  \emb   X.
\end{equation}
For the rest of this discussion, we assume for simplicity that $A$ is
invertible. In this case, $\dot{X}_1(A) = \DOMAIN(A)$ and 
$\dot{X}_{-1} (A) = X_{-1}$.
It is a known fact that the semigroup $(T(t))$ extends to a strongly
continuous semigroup $(T_{-1}(t))$ on the extrapolation space
$X_{-1}$, whose (negative) generator $A_{-1}$ is an extension of $A$
(see \cite{EngelNagel} Chapter II, Section 5). In addition $A$ is the part of $A_{-1}$ on $X$ and hence
$\DOMAIN(A_{-1}^2) = \DOMAIN(A)$ with equivalent norms. Indeed,
\begin{align*}
x \in \DOMAIN(A_{-1}^2) 
& \Leftrightarrow x \in \DOMAIN(A_{-1}) = X, \,  A_{-1} x \in \DOMAIN(A_{-1}) \\
& \Leftrightarrow   x \in X, \, A_{-1} x \in \DOMAIN(A_{-1})\\
& \Leftrightarrow   x \in  \DOMAIN(A).
\end{align*}
The fact that $A_{-1}: X \to X_{-1}$ is an isometry implies that 
$\norm{Ax}_X = \norm{A_{-1}^2 x }_{X_{-1}}$. \\
Assume now that $\beta < \frac{1}{2}$.  We have 
 \begin{align*}
      \left( X_{-1}, \DOMAIN(A) \right)_{\frac{3}{4} -\frac{\beta}{2}, 2} 
&=    \left( X_{-1}, \DOMAIN(A_{-1}^2) \right)_{\frac{3}{4} -\frac{\beta}{2}, 2}\\
&=  \left( \DOMAIN(A_{-1}), \DOMAIN(A_{-1}^2) \right)_{\frac{1}{2} -\beta, 2}\\
&=  \left( X, \DOMAIN(A) \right)_{\frac{1}{2} -\beta, 2} \emb X. 
\end{align*}
Note that the second equality follows from
\cite[p. 105]{Triebel:interpolation}.  Hence for $\beta < \frac{1}{2}$
\begin{equation}\label{beta-petit}
  X_{\onehalf -\beta, 2} 
= \left( X_{-1}, \DOMAIN(A) \right)_{\frac{3}{4} -\frac{\beta}{2}, 2} \emb X,
\end{equation}
which means that (\ref{eq:concrete-lower-sq-f-est}) always holds for
$\beta < \frac{1}{2}$.
  
Let us finally mention that for $\beta > \frac{1}{2}$,
(\ref{eq:concrete-lower-sq-f-est}) never holds for non-negative
self-adjoint operators with compact resolvent in infinite dimension
separable Hilbert spaces. Indeed, consider such an operator $A $.  The
spectrum is discrete $\sigma(A) = \{ \lambda_n \}$ with $\lambda_n \to
+ \infty$. Applying (\ref{eq:concrete-lower-sq-f-est}) to a normalised
eigenvector $x = \varphi_n$ (associated with $\lambda_n$) yields
 \begin{align*}
  1 
&\le \; K^2 \int_0^\infty  \bignorm{  (tA)^{-\beta}\varphi_n}^2 
    (e^{-2t^{2\beta}}-e^{-t^{2\beta}})^2\, \frac{dt}t\\
&=  \; \frac{K^2}{\lambda_n^{2\beta}}  \int_0^\infty  t^{-2 \beta} 
    (e^{-2t^{2\beta}\lambda_n}-e^{-t^{2\beta} \lambda_n})^2\, \frac{dt}t\\
&=  \; \frac{K^2 \lambda_n}{2 \beta \lambda_n^{2\beta}}  \int_0^\infty   
    (e^{-2s}-e^{-s})^2\, \frac{ds}{s^2}\\
&= \; C \lambda_n^{1- 2 \beta}.
  \end{align*}
  For $\beta > \frac{1}{2}$, the last term goes to $0$ as $n$ tends to
  $+\infty$. This shows that (\ref{eq:concrete-lower-sq-f-est}) cannot
  hold.
  
  Now we come to our main result of this section concerning exact
  observability.
\begin{theorem}\label{thm:main-bernhard}
  Let $-A$ be the generator of a bounded semigroup on the Banach space
  $X$ and assume that $A$ is injective and has dense range. Let $C:
  \DOMAIN(A) \to Y$ be bounded and suppose that there exists $\beta
  \in (0, 1)$ such that the lower square function estimate
  (\ref{eq:concrete-lower-sq-f-est}) and (\ref{eq:Rbeta}) are
  satisfied.  Then there exists a constant $m>0$ such that 
\begin{equation}\label{eq:exobs-on-core}
  m^2 \norm{x}^2 \le 
                 \int_0^\infty \norm{ C T(t)x }_Y^2 \,dt
\end{equation}
for all $x\in \DOMAIN(A) \cap \RANGE(A)$.
\end{theorem}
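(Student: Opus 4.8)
The plan is to convert the right-hand side of the square function estimate (\ref{eq:concrete-lower-sq-f-est}) into a time integral of $CT(\cdot)x$, exploiting the fact that the fractional power $A^{-\beta}$ occurring there combines with the $A^{-(1-\beta)}$ of (\ref{eq:Rbeta}) to produce exactly $A^{-1}$, which the fundamental theorem of calculus turns into an integral of the semigroup over a dyadic time window. First I would rewrite (\ref{eq:concrete-lower-sq-f-est}) through the substitution $s=t^{2\beta}$, so that $(tA)^{-\beta}=s^{-\onehalf}A^{-\beta}$ and $\tfrac{dt}t=\tfrac1{2\beta}\tfrac{ds}s$, obtaining
\[
  \norm{x}^2 \le \frac{K^2}{2\beta}\int_0^\infty \bignorm{A^{-\beta}(T(2s)-T(s))x}^2\,\frac{ds}{s^2}.
\]

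Next I would read (\ref{eq:Rbeta}) in the equivalent form $\delta\,\norm{A^{-\beta}z}\le\norm{CA^{-1}z}_Y$ suggested in the text and apply it to $z=(T(2s)-T(s))x$, which lies in $\DOMAIN(A)\cap\RANGE(A)$ since $x$ does and the semigroup preserves this set. This bounds each integrand above by $\delta^{-2}\bignorm{CA^{-1}(T(2s)-T(s))x}_Y^2$. The decisive identity is
\[
  A^{-1}(T(2s)-T(s)) = -\int_s^{2s} T(r)\,dr,
\]
which follows from $\tfrac{d}{dr}T(r)=-AT(r)$ and the fact that the closed operator $A$ passes through the Bochner integral; applying the bounded operator $C$ inside then gives $CA^{-1}(T(2s)-T(s))x=-\int_s^{2s}CT(r)x\,dr$.

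From this point the argument is purely analytic. Cauchy--Schwarz yields $\bignorm{\int_s^{2s}CT(r)x\,dr}_Y^2\le s\int_s^{2s}\norm{CT(r)x}_Y^2\,dr$, and Fubini's theorem over the region $\{\,r/2\le s\le r\,\}$ gives
\[
  \int_0^\infty \frac1s\int_s^{2s}\norm{CT(r)x}_Y^2\,dr\,ds
  = \ln 2 \int_0^\infty \norm{CT(r)x}_Y^2\,dr.
\]
Chaining the three displays produces $\norm{x}^2\le \tfrac{K^2\ln 2}{2\beta\delta^2}\int_0^\infty\norm{CT(r)x}_Y^2\,dr$, i.e.\ (\ref{eq:exobs-on-core}) with $m^2=2\beta\delta^2/(K^2\ln 2)$.

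The substitution, Cauchy--Schwarz, and Fubini steps are mechanical. The step needing the most care is verifying that all manipulations are legitimate on $\DOMAIN(A)\cap\RANGE(A)$: that $A^{-\beta}$ and $A^{-1}$ are well defined there (using injectivity and dense range, which make $A$ sectorial with fractional powers available), that $(T(2s)-T(s))x$ remains in $\DOMAIN(A)\cap\RANGE(A)$ so that (\ref{eq:Rbeta}) genuinely applies, and that $C$ --- bounded only for the graph norm --- may be pulled through $\int_s^{2s}T(r)x\,dr$, whose integrand is $\DOMAIN(A)$-valued and graph-norm continuous. I expect no genuine obstacle beyond this bookkeeping, since the algebraic heart, namely that $A^{-\beta}\cdot A^{-(1-\beta)}=A^{-1}$ turns the square function into a windowed time average of the observation, is precisely what matches the two hypotheses.
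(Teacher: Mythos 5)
Your proof is correct and follows essentially the same route as the paper's: lower square function estimate, then (\ref{eq:Rbeta}) in the form $\norm{CA^{-1}z}_Y \ge \delta\norm{A^{-\beta}z}$ to merge the fractional powers into $CA^{-1}$, then the identity $CA^{-1}(T(2s)-T(s))x = -\int_s^{2s}CT(r)x\,dr$, Cauchy--Schwarz, and an integration-order/change-of-variables argument yielding the factor $\log 2$. The only difference is cosmetic --- you perform the substitution $s=t^{2\beta}$ at the outset and finish with Fubini, whereas the paper substitutes at the end --- and you arrive at the same constant $m^2 = 2\beta\delta^2/(K^2\log 2)$.
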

\begin{proof}
  Fix $x\in \DOMAIN(A) \cap \RANGE(A)$ and apply
  the lower square function estimate
  (\ref{eq:concrete-lower-sq-f-est}) to obtain
\begin{align*}
  \norm{x}^2 
& \le K^2 \int_0^\infty \norm{ (tA)^{-\beta} (T(2t^{2\beta})-T(t^{2\beta})) x }^2 
   \,\tfrac{dt}t\\
& \le \tfrac{K^2} {\delta^2} \int_0^\infty \norm{ C A^{-(1-\beta)}
     (tA)^{-\beta} (T(2t^{2\beta})-T(t^{2\beta})) x }_Y^2 \, \tfrac{dt}t\\
& = \tfrac{K^2} {\delta^2} \int_0^\infty \norm{ C A^{-1} 
     (T(2t^{2\beta})-T(t^{2\beta}))  x }_Y^2 \, \tfrac{dt}{t^{1+2\beta}}.
\end{align*}
Using the fact that $x\in \DOMAIN(A)$, we can write
\[
   C A^{-1} (T(2t^{2\beta})-T(t^{2\beta}))  x = - C
   A^{-1}\int_{t^{2\beta}}^{2t^{2\beta}} A T(s)x \,ds = -
   \int_{t^{2\beta}}^{2t^{2\beta}} C T(s)x \,ds.
\]
Using this and the previous estimates yields
\begin{align*}
\norm{x}^2 
& \le  \tfrac{K^2} {\delta^2} \int_0^\infty \norm{ \int_{t^{2\beta}}^{2t^{2\beta}}
     C T(s)x \,ds }_Y^2 \, \tfrac{dt}{t^{1+2\beta}}\\
&\le \tfrac{K^2} {\delta^2} \int_0^\infty
      \biggl(\int_{t^{2\beta}}^{2t^{2\beta}}  1\cdot \norm{C T(s)x}_Y \,ds
       \biggr)^2 \, \tfrac{dt}{t^{1+2\beta}}\\
& \le \tfrac{K^2} {\delta^2} \int_0^\infty
     \int_{t^{2\beta}}^{2t^{2\beta}} \norm{ C T(s)x }_Y^2 \,ds\, \tfrac{dt}{t}\\
& = \tfrac{K^2} {\delta^2} \int_1^2 \int_0^\infty  \norm{ C
        T(t^{2\beta} u)x }_Y^2 t^{2\beta-1} \,dt  \, du\\
& = \log(2) \tfrac{ K^2} {2\beta\delta^2} \int_0^\infty  
       \norm{ C T(r)x}_Y^2 \,dr. 
\end{align*}
This shows (\ref{eq:exobs-on-core}) with $m = \sqrt{\log(2)/2\beta}
\frac{ K} {\delta}$.
\end{proof}
Notice that if $0 \in \varrho(A)$ then $\RANGE(A) = X$ and hence
$\DOMAIN(A) \cap \RANGE(A) = \DOMAIN(A)$. In this case
(\ref{eq:exobs-on-core}) holds for all $x \in \DOMAIN(A)$ and this
means that $C$ is exactly observable for $A$.
\begin{corollary}\label{cor:main-thm-A-inv}
  Let $-A$ be the generator of a bounded semigroup on the Banach space
  $X$ and assume that $A$ is boundedly invertible.  Let $C: \DOMAIN(A)
  \to Y$ be a bounded operator and assume that there exists $\beta \in
  (0, 1)$ such that the lower square function estimate
  (\ref{eq:concrete-lower-sq-f-est}) and (\ref{eq:Rbeta}) are
  satisfied. Then $C$ is exactly observable for $A$.
\end{corollary}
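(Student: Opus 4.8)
The plan is to deduce the statement directly from Theorem~\ref{thm:main-bernhard}, exploiting that bounded invertibility of $A$ is strictly stronger than the injectivity-and-dense-range hypothesis used there. First I would record the two consequences of $0 \in \varrho(A)$: the bounded inverse $A^{-1}$ witnesses injectivity, and surjectivity gives $\RANGE(A) = X$, so in particular $A$ has dense range. Thus every standing assumption of Theorem~\ref{thm:main-bernhard} is met, while the two substantive hypotheses (\ref{eq:concrete-lower-sq-f-est}) and (\ref{eq:Rbeta}) are exactly the ones assumed in the corollary and so transfer unchanged.

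Applying the theorem then produces a constant $m>0$ with
\[
  m^2 \norm{x}^2 \le \int_0^\infty \norm{C T(t) x}_Y^2 \, dt
\]
for every $x \in \DOMAIN(A) \cap \RANGE(A)$. The only remaining task is to identify this set of admissible test vectors. Since $\RANGE(A) = X$ we have $\DOMAIN(A) \cap \RANGE(A) = \DOMAIN(A)$, so (\ref{eq:exobs-on-core}) holds on all of the domain $\DOMAIN(A)$ on which $C$ is defined. By the definition of exact observability this is precisely the assertion $m(\infty) > 0$, i.e. $C$ is exactly observable for $A$.

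The step I would watch most carefully is this last identification, because in Theorem~\ref{thm:main-bernhard} the restriction to the core $\DOMAIN(A) \cap \RANGE(A)$ is genuine: the estimate invokes (\ref{eq:Rbeta}), which is posited only for $x \in \DOMAIN(A) \cap \RANGE(A)$, and the rewriting $CA^{-1}(T(2t^{2\beta})-T(t^{2\beta}))x = -\int_{t^{2\beta}}^{2t^{2\beta}} CT(s)x\,ds$ relies on $A^{-1}x$ making sense. Under only injectivity and dense range, $\RANGE(A)$ is in general a proper dense subspace, and one would need a separate closure or density argument to pass from the core to all of $\DOMAIN(A)$. Bounded invertibility removes this obstruction outright by making $A^{-1}$ (and hence $A^{-(1-\beta)}$) a bounded operator defined on the whole of $X$, so no such extension argument is required and the conclusion is immediate.
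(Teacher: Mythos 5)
Your proposal is correct and follows exactly the paper's own route: the paper likewise observes that $0 \in \varrho(A)$ gives $\RANGE(A) = X$, hence $\DOMAIN(A) \cap \RANGE(A) = \DOMAIN(A)$, so the estimate (\ref{eq:exobs-on-core}) from Theorem~\ref{thm:main-bernhard} holds for all $x \in \DOMAIN(A)$, which is precisely exact observability (in infinite time). Your closing remark correctly identifies why no separate density or closure argument (as in Corollary~\ref{cor:main-thm-admiss}) is needed here.
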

\begin{corollary}\label{cor:main-thm-admiss}
  Let $-A$ be the generator of a bounded semigroup on the Banach space
  $X$ and assume that $A$ is injective and has dense range. Let $C:
  \DOMAIN(A) \to Y$ be infinite-time admissible for $A$ and assume
  that there exists $\beta \in (0, 1)$ such that the lower square
  function estimate (\ref{eq:concrete-lower-sq-f-est}) and
  (\ref{eq:Rbeta}) are satisfied. Then $C$ is exactly observable for $A$.
\end{corollary}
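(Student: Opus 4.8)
The plan is to deduce the corollary from Theorem~\ref{thm:main-bernhard} by an approximation argument, using infinite-time admissibility precisely to push the lower estimate from a dense core to the whole domain. First I would observe that all the hypotheses of Theorem~\ref{thm:main-bernhard} are in force: $-A$ generates a bounded semigroup, $A$ is injective with dense range, $C$ is bounded from $\DOMAIN(A)$ to $Y$, and both (\ref{eq:concrete-lower-sq-f-est}) and (\ref{eq:Rbeta}) hold. Hence the theorem yields a constant $m>0$ and the estimate (\ref{eq:exobs-on-core}), namely $m^2\norm{x}^2 \le \int_0^\infty \norm{CT(t)x}_Y^2\,dt$, but \emph{only} for $x$ in the core $\DOMAIN(A)\cap\RANGE(A)$. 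Unlike in Corollary~\ref{cor:main-thm-A-inv}, where $0\in\varrho(A)$ forces $\RANGE(A)=X$ and hence $\DOMAIN(A)\cap\RANGE(A)=\DOMAIN(A)$, here $A$ is only injective with dense range, so this core is in general a proper subspace of $\DOMAIN(A)$ and the estimate must still be transported to all of $\DOMAIN(A)$.

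The mechanism for this transport is the infinite-time admissibility of $C$. It says exactly that $\int_0^\infty \norm{CT(t)x}_Y^2\,dt \le M(\infty)^2\norm{x}^2$ for $x\in\DOMAIN(A)$, so that the map $\Psi_\infty\colon x\mapsto CT(\cdot)x$ extends to a bounded operator from $X$ into $L^2(0,\infty;Y)$. In particular $x\mapsto \int_0^\infty\norm{CT(t)x}_Y^2\,dt = \norm{\Psi_\infty x}_{L^2}^2$ is a continuous function on all of $X$. Since $x\mapsto m^2\norm{x}^2$ is trivially continuous as well, (\ref{eq:exobs-on-core}) is an inequality between two continuous functions on $X$ that is already known to hold on the subspace $\DOMAIN(A)\cap\RANGE(A)$.

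It then remains to know that $\DOMAIN(A)\cap\RANGE(A)$ is dense in $X$. Because $-A$ generates a bounded semigroup, $A$ is sectorial, and for an injective sectorial operator with dense range this density is standard: one approximates the identity strongly by operators of the form $\phi_n(A)$, with $\phi_n$ in the ideal $H_0^\infty$, whose ranges lie in $\DOMAIN(A)\cap\RANGE(A)$; see \cite{Haase:Buch}. Granting this, for any $x_0\in\DOMAIN(A)$ I would choose $x_n\in\DOMAIN(A)\cap\RANGE(A)$ with $x_n\to x_0$ in $X$, apply (\ref{eq:exobs-on-core}) to each $x_n$, and let $n\to\infty$; both sides converge, the right-hand side by boundedness of $\Psi_\infty$, so $m^2\norm{x_0}^2\le\int_0^\infty\norm{CT(t)x_0}_Y^2\,dt$. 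This is (\ref{eq:exobs-on-core}) now valid for every $x\in\DOMAIN(A)$, which is exactly observability of $C$ for $A$ in the sense used in the remark following Theorem~\ref{thm:main-bernhard}.

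I expect the main subtlety to be not the limit itself but the role of admissibility in making it legitimate. The estimate one starts from is a \emph{lower} bound, so passing to a limit is only safe because admissibility furnishes an a~priori \emph{upper} bound on the same quantity, forcing $\norm{\Psi_\infty x_n}_{L^2}^2\to\norm{\Psi_\infty x_0}_{L^2}^2$ rather than allowing it to blow up along the approximating sequence. Without this control the lower bound on the dense core need not survive in the limit, which is exactly why admissibility cannot be dropped here although it is unnecessary in the boundedly invertible situation of Corollary~\ref{cor:main-thm-A-inv}. The only genuinely external ingredient is the density of $\DOMAIN(A)\cap\RANGE(A)$, for which I would simply invoke the $H^\infty$-calculus machinery recalled in the next section.
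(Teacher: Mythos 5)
Your proposal is correct and follows essentially the same route as the paper: apply Theorem~\ref{thm:main-bernhard} on the core $\DOMAIN(A)\cap\RANGE(A)$ and use infinite-time admissibility, which makes $x\mapsto\int_0^\infty\norm{CT(t)x}_Y^2\,dt$ continuous in the $X$-norm, to transport the lower estimate (\ref{eq:exobs-on-core}) to all of $\DOMAIN(A)$ by approximation. The only cosmetic difference is that the paper exhibits the explicit approximating sequence $x_n := n(n{+}A)^{-1}x - n^{-1}(n^{-1}{+}A)^{-1}x \in \DOMAIN(A)\cap\RANGE(A)$, converging even in the graph norm, whereas you invoke the standard density of $\DOMAIN(A)\cap\RANGE(A)$ for injective sectorial operators with dense range from \cite{Haase:Buch} --- of which the paper's sequence is precisely a concrete instance.
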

\begin{proof}
It remains now to extend the estimate (\ref{eq:exobs-on-core}) from
Theorem~\ref{thm:main-bernhard} for all $x \in \DOMAIN(A)$. This is an
easy task.  For $x \in \DOMAIN(A)$ the sequence
\[
   x_n := n(n{+}A)^{-1} x - n^{-1} (n^{-1}{+}A)^{-1}x 
    \in \DOMAIN(A) \cap \RANGE(A)
\] 
converges to $x$ in $\DOMAIN(A)$ (for the graph norm). Therefore,
$Cx_n$ converges in $Y$ to $Cx$ and $\int_0^\infty \norm{ C T(t)x_n
}_Y^2 \,dt$ converges to $\int_0^\infty \norm{ C T(t)x }_Y^2 \,dt$
since $C$ is supposed to be infinite-time admissible. We obtain
(\ref{eq:exobs-on-core}) for all $x \in \DOMAIN(A)$.  
\end{proof}
In the next corollary we obtain a criterion for finite time exact
observability. 
\begin{corollary}\label{cor:ex-tau}
  Let $-A$ be the generator of a bounded semigroup on the Banach space
  $X$ and assume that there exists a constant $\om > 0$ such that 
  $A + \om$ satisfies 
  (\ref{eq:concrete-lower-sq-f-est}). Assume that 
  $C: \DOMAIN(A) \to Y$ is bounded and that
\[
   \norm{C (\om+A)^{-(1-\beta)} x}_Y \ge \delta  \norm{x}
\]
for $x \in \DOMAIN(A)$.  Then $C$ is exactly observable in finite time.
\end{corollary}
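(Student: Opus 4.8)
The plan is to reduce to the boundedly invertible situation already handled in Corollary~\ref{cor:main-thm-A-inv} and then extract a \emph{finite} time by a scale truncation. First I would set $B := A + \om$. Since $-A$ generates a bounded semigroup, $\sigma(A)$ lies in the closed right half-plane, so $\sigma(B) = \sigma(A) + \om \subseteq \{\Re \la \ge \om\}$; hence $0 \in \varrho(B)$, $B$ is boundedly invertible and $B^{-\beta}$ is bounded. Moreover $-B$ generates $S(t) := e^{-\om t}T(t)$, which is exponentially stable: with $M := \sup_{t\ge0}\norm{T(t)}$ one has $\norm{S(t)} \le M e^{-\om t}$. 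Because the graph norms of $A$ and $B$ coincide, $C$ is bounded on $\DOMAIN(B) = \DOMAIN(A)$; the hypothesis $\norm{C(\om+A)^{-(1-\beta)}x}_Y \ge \delta\norm{x}$ is exactly (\ref{eq:Rbeta}) for $B$ (here $\DOMAIN(B)\cap\RANGE(B) = \DOMAIN(A)$); and by assumption $B$ satisfies (\ref{eq:concrete-lower-sq-f-est}). Thus $(B,C)$ meets all hypotheses of Theorem~\ref{thm:main-bernhard}.

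The hard part will be that Theorem~\ref{thm:main-bernhard} (and Corollary~\ref{cor:main-thm-A-inv}) delivers only the infinite-time bound $m^2\norm{x}^2 \le \int_0^\infty \norm{CS(t)x}_Y^2\,dt$, while we want a lower bound over a finite interval. The naive route of splitting this integral at some $\eta$ and discarding the tail $\int_\eta^\infty \norm{CS(t)x}_Y^2\,dt$ would require bounding that tail uniformly in $x$, i.e. invoking admissibility of $C$, which we do not assume. I expect this to be the main obstacle, and I would avoid it by truncating not the time integral but the \emph{scale} integral inside (\ref{eq:concrete-lower-sq-f-est}).

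Concretely, I would re-run the proof of Theorem~\ref{thm:main-bernhard} for $B$ but cut the scale integral at a large $T_0$. Writing $(tB)^{-\beta} = t^{-\beta}B^{-\beta}$ and using $\norm{S(s)} \le Me^{-\om s}$, the discarded part obeys
\[
\int_{T_0}^\infty \bignorm{(tB)^{-\beta}(S(2t^{2\beta})-S(t^{2\beta}))x}^2\,\tfrac{dt}t \le 4M^2\norm{B^{-\beta}}^2\Bigl(\int_{T_0}^\infty t^{-2\beta}e^{-2\om t^{2\beta}}\,\tfrac{dt}t\Bigr)\norm{x}^2 =: \eps(T_0)\norm{x}^2,
\]
where $\eps(T_0) \to 0$ as $T_0 \to \infty$ thanks to the exponential factor. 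Choosing $T_0$ so large that $K^2\eps(T_0) \le \tfrac12$ and absorbing this term into the left-hand side of (\ref{eq:concrete-lower-sq-f-est}), I am left with $\tfrac12\norm{x}^2 \le K^2\int_0^{T_0}\bignorm{(tB)^{-\beta}(S(2t^{2\beta})-S(t^{2\beta}))x}^2\,\tfrac{dt}t$ and then exactly the chain of estimates of Theorem~\ref{thm:main-bernhard}: apply (\ref{eq:Rbeta}) to $(tB)^{-\beta}(S(2t^{2\beta})-S(t^{2\beta}))x$, use $B^{-(1-\beta)}(tB)^{-\beta} = t^{-\beta}B^{-1}$ and $CB^{-1}(S(2t^{2\beta})-S(t^{2\beta}))x = -\int_{t^{2\beta}}^{2t^{2\beta}}CS(s)x\,ds$, then Cauchy--Schwarz and Tonelli, now with the scale variable confined to $(0,T_0)$.

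The decisive bookkeeping point is that after the substitution $s = t^{2\beta}u$ with $u\in[1,2]$, the time $s$ ranges only over $(0, 2T_0^{2\beta})$, so the final integral is over a finite interval:
\[
\tfrac12\norm{x}^2 \le \tfrac{\log(2)\,K^2}{2\beta\delta^2}\int_0^{2T_0^{2\beta}} \norm{CS(r)x}_Y^2\,dr.
\]
Since $\norm{CS(r)x}_Y = e^{-\om r}\norm{CT(r)x}_Y \le \norm{CT(r)x}_Y$, this gives $\int_0^\eta \norm{CT(r)x}_Y^2\,dr \ge m(\eta)^2\norm{x}^2$ with $\eta = 2T_0^{2\beta}$ and $m(\eta)^2 = \beta\delta^2/(\log(2)K^2)$, valid for all $x\in\DOMAIN(A)$, which is exact observability in the finite time $\eta$. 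The only genuinely new ingredient beyond Theorem~\ref{thm:main-bernhard} is the exponential-decay estimate that lets one discard the large scales; I would double-check the Tonelli step to confirm that truncating the scales at $T_0$ merely caps the times at $2T_0^{2\beta}$ without enlarging the constant.
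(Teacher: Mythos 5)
Your proof is correct, but it takes a genuinely different route from the paper's, and the difference lies exactly at the point you flagged as the main obstacle. The paper's own proof \emph{is} the ``naive'' route you chose to avoid: it applies Theorem~\ref{thm:main-bernhard} to $\om+A$ as a black box to get $\norm{x}^2 \le M\int_0^\infty \norm{C e^{-\om t}T(t)x}_Y^2\,dt$, splits this \emph{time} integral at $\tau$, rewrites the tail via the semigroup law as $e^{-\om\tau}\int_0^\infty\norm{Ce^{-\om t}T(t)T(\tau)x}_Y^2\,dt$, bounds that by $M''e^{-\om\tau}\norm{x}^2$ by asserting that $C$ is infinite-time admissible for $e^{-\om t}T(t)$, and absorbs the resulting term $MM''e^{-\om\tau}\norm{x}^2$ into the left-hand side for $\tau$ large. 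As you correctly observed, that admissibility is not among the hypotheses of the corollary: boundedness of $C:\DOMAIN(A)\to Y$ only yields $\int_0^\infty\norm{Ce^{-\om t}T(t)x}_Y^2\,dt \lesssim \norm{x}_{\DOMAIN(A)}^2$, not $\lesssim\norm{x}^2$, absent analyticity or an explicit admissibility assumption, so the paper's absorption step rests on an unstated hypothesis. Your scale-truncation argument — cutting (\ref{eq:concrete-lower-sq-f-est}) for $B=A+\om$ at scale $T_0$, discarding the tail via $\norm{(tB)^{-\beta}(S(2t^{2\beta})-S(t^{2\beta}))x} \le 2M\norm{B^{-\beta}}\,t^{-\beta}e^{-\om t^{2\beta}}\norm{x}$, and re-running the theorem's chain of estimates on $(0,T_0)$ — is carried out correctly: the Tonelli/substitution step indeed confines $r=t^{2\beta}u$ to $(0,2T_0^{2\beta})$ without changing the constant $\log(2)/(2\beta)$, and the application of (\ref{eq:Rbeta}) is legitimate since the vectors involved stay in $\DOMAIN(A)=\DOMAIN(B)\cap\RANGE(B)$. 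So your proof buys a derivation from the stated hypotheses alone, with an explicit observation time and constant, at the cost of reopening the proof of Theorem~\ref{thm:main-bernhard} rather than citing it. One further remark: your tail estimate does not actually need the exponential factor, since $\int_{T_0}^\infty t^{-2\beta}\,\tfrac{dt}{t} = T_0^{-2\beta}/(2\beta)\to 0$ already; the only role of $\om$ is to make $B$ boundedly invertible so that $\norm{B^{-\beta}}<\infty$. Consequently the same truncation, applied verbatim, upgrades Corollary~\ref{cor:main-thm-A-inv} as well: under its hypotheses (bounded semigroup, $A$ boundedly invertible, (\ref{eq:concrete-lower-sq-f-est}) and (\ref{eq:Rbeta})), one in fact gets exact observability in \emph{finite} time, which is slightly stronger than what the paper states there.
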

\begin{proof} 
  We apply the previous theorem to $\om + A$ and obtain
\[ 
  \norm{x}^2 \le M \int_0^\infty \norm{ C e^{-\om t} T(t)x }_Y^2 \,dt
\]
for all $x \in \DOMAIN(A)$.  We split the right hand side into two
parts and write
\begin{align*}
    \int_0^\infty \norm{ C e^{-\om t} T(t)x }_Y^2 \,dt 
& = \int_0^\tau  \norm{ C e^{-\om t} T(t)x }_Y^2 \,dt+
    \int_\tau^\infty \norm{ C e^{-\om t} T(t)x }_Y^2 \,dt\\
&\le\int_0^\tau  \norm{ C T(t)x }_Y^2 \,dt + 
     \int_0^\infty \norm{ C e^{-\om(\tau + t)} T(t + \tau)x }_Y^2 \,dt\\
& =  \int_0^\tau  \norm{ C T(t)x }_Y^2 \,dt +  
     e^{-\om \tau} \int_0^\infty \norm{ C e^{-\om t} T(t )T(\tau)x }_Y^2 \,dt.
 \end{align*}
 Since $C$ is infinite-time admissible for $e^{-\om t} T(t)$ and the
 semigroup $T(t)$ is bounded we have for some constants $M', M''$
\[ 
   \int_0^\infty \norm{ C e^{-\om t} T(t) T( \tau)x }_Y^2 \,dt \le M'
   \norm{T(\tau) x}^2 \le M'' \norm{x}^2.
\]
Therefore,
\[ 
\norm{x}^2 \le M  \int_0^\tau  \norm{ C T(t)x }_Y^2 \,dt + MM''
e^{-\om \tau} \norm{x}^2.
\]
If we choose $\tau$ large enough such that $MM'' e^{-\om \tau} < 1$ we
obtain the desired inequality.
\end{proof}
As explained at the beginning of this section,
(\ref{eq:concrete-lower-sq-f-est}) holds for all $\beta < \frac{1}{2}$
and all generators of bounded semigroup (see (\ref{beta-petit})).
Therefore, applying Theorem~\ref{thm:main-bernhard} with $\beta =
\frac{1}{2} - \eps$, we obtain the following corollary.
\begin{corollary}
  Let $-A$ be the generator of a bounded semigroup $(T(t))_{t\ge0}$ on
  a $X$ and assume that $A$ is invertible. Then, if
  $\norm{CA^{-\onehalf + \eps} x} \ge \delta \norm{x}$ for some $\eps,
  \delta>0$ and all $x\in \DOMAIN(A)$, $C$ is infinite-time exactly
  observable for $A$.
\end{corollary}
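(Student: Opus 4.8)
The plan is to derive the corollary as a special case of Theorem~\ref{thm:main-bernhard}. Because $-A$ generates a bounded semigroup and $A$ is invertible, $A$ is automatically injective with dense range, $\RANGE(A)=X$, and $\DOMAIN(A)\cap\RANGE(A)=\DOMAIN(A)$; hence the conclusion (\ref{eq:exobs-on-core}) of the theorem, a priori valid on $\DOMAIN(A)\cap\RANGE(A)$, is precisely infinite-time exact observability for all $x\in\DOMAIN(A)$. So everything reduces to producing one exponent $\beta\in(0,1)$ for which both hypotheses of the theorem hold simultaneously: the coercivity bound (\ref{eq:Rbeta}) and the lower square-function estimate (\ref{eq:concrete-lower-sq-f-est}).

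First I would match the assumption to (\ref{eq:Rbeta}). The hypothesis $\norm{CA^{-\onehalf+\eps}x}\ge\delta\norm{x}$ coincides with (\ref{eq:Rbeta}), which reads $\norm{CA^{-(1-\beta)}x}\ge\delta\norm{x}$, exactly when $1-\beta=\onehalf-\eps$, that is $\beta=\onehalf+\eps$. With this value of $\beta$ the assumption is literally (\ref{eq:Rbeta}), and the sole remaining task is to verify (\ref{eq:concrete-lower-sq-f-est}) for $\beta=\onehalf+\eps$.

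This last step is the main obstacle, and it is fatal. The interpolation chain leading to (\ref{beta-petit}) makes (\ref{eq:concrete-lower-sq-f-est}) automatic only while $\beta<\onehalf$, since it ends in the embedding $\left(X,\DOMAIN(A)\right)_{\onehalf-\beta,2}\emb X$, whose interpolation parameter $\onehalf-\beta$ must lie in $(0,1)$. For $\beta=\onehalf+\eps>\onehalf$ this parameter is negative and the argument collapses; moreover the eigenvector computation recorded just before the theorem shows that (\ref{eq:concrete-lower-sq-f-est}) genuinely fails for every non-negative self-adjoint operator with compact resolvent in infinite dimensions, the governing quantity degenerating like $\lambda_n^{1-2\beta}\to0$. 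Thus Theorem~\ref{thm:main-bernhard} cannot be invoked with the exponent $-\onehalf+\eps$, and no routine completion argument of the kind used in Corollary~\ref{cor:main-thm-admiss} repairs this.

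The remedy is to run the argument with $\beta=\onehalf-\eps<\onehalf$ instead. Then (\ref{eq:concrete-lower-sq-f-est}) holds unconditionally for every generator of a bounded semigroup by (\ref{beta-petit}), and (\ref{eq:Rbeta}) takes the form $\norm{CA^{-(\onehalf+\eps)}x}\ge\delta\norm{x}$, so Theorem~\ref{thm:main-bernhard} applies verbatim and yields infinite-time exact observability. Consequently the exponent provable by this method is $-\onehalf-\eps$, and the displayed $-\onehalf+\eps$ should be read as a sign slip in the exponent: as literally stated the conclusion fails, as one sees by taking $A$ diagonal with eigenvalues $\lambda_n=n$ and $C$ diagonal with weights $\lambda_n^{\onehalf-\eps}$, which satisfies $\norm{CA^{-\onehalf+\eps}x}=\norm{x}$ and yet makes $\int_0^\infty\norm{CT(t)x}_Y^2\,dt$ decay like $\lambda_n^{-2\eps}$ on the eigenvectors $\varphi_n$.
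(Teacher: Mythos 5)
Your proposal is correct and, for the statement as it was actually intended, it is exactly the paper's proof: the authors' own one-line argument is ``applying Theorem~\ref{thm:main-bernhard} with $\beta=\frac12-\eps$'', for which (\ref{eq:concrete-lower-sq-f-est}) is automatic by (\ref{beta-petit}) and (\ref{eq:Rbeta}) becomes $\norm{CA^{-(1-\beta)}x}_Y=\norm{CA^{-\onehalf-\eps}x}_Y\ge\delta\norm{x}$; invertibility gives injectivity, dense range, $\RANGE(A)=X$, hence $\DOMAIN(A)\cap\RANGE(A)=\DOMAIN(A)$, so (\ref{eq:exobs-on-core}) is infinite-time exact observability --- precisely your reduction. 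Your diagnosis of the exponent is also right, and worth recording: matching the printed hypothesis literally forces $\beta=\onehalf+\eps$, for which (\ref{eq:concrete-lower-sq-f-est}) fails (the paper's own eigenvector computation shows it never holds for $\beta>\onehalf$ for self-adjoint operators with compact resolvent), and the monotonicity remark at the start of Section~5 ($\norm{CA^{-\al-\eps}x}\ge\delta\norm{x}$ implies $\norm{CA^{-\al}x}\ge\delta'\norm{x}$, not conversely) shows the printed hypothesis is the \emph{weaker} one, so it cannot be upgraded to the condition the theorem needs. Your counterexample is valid: with $A=\mathrm{diag}(n)$ on $\ell^2$ and $C=A^{\onehalf-\eps}$ one has $CA^{-\onehalf+\eps}=I$ on $\DOMAIN(A)$, while $\int_0^\infty\norm{CT(t)\varphi_n}_Y^2\,dt=\tfrac12 n^{-2\eps}\to0$ on normalised eigenvectors, so the corollary as literally printed is false and the exponent must be read as $-(\onehalf+\eps)$ --- a sign slip, exactly as you conclude; note this corrected hypothesis is also consistent with the paper's surrounding discussion, being more restrictive than the $\beta=\onehalf$ condition of Corollary~\ref{cor:contraction-sg}, as the authors say conditions with smaller $\beta$ must be.
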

\section{Exact observability on Hilbert spaces}
\begin{proposition}\label{prop:necessarily-contraction-sg}
  Let $X$ and $Y$ be Hilbert spaces. Then, if $(A, C)$ is exactly
  observable and admissible in infinite time, $T(t)_{t\ge 0}$ is
  similar to a contraction semigroup.
\end{proposition}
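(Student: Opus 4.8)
The plan is to renorm $X$ with an equivalent inner product built from the observation map, under which $T(t)_{t\ge0}$ becomes contractive; by definition this is precisely what similarity to a contraction semigroup means. By infinite-time admissibility, the map $x\mapsto CT(\cdot)x$, initially defined on $\DOMAIN(A)$, extends to a bounded operator $\Psi\colon X\to L^2(0,\infty;Y)$ with $\norm{\Psi x}_{L^2}\le M\norm{x}$, where $M=M(\infty)<\infty$; exact observability in infinite time says $\norm{\Psi x}_{L^2}\ge m\norm{x}$ with $m=m(\infty)>0$. I would then set
\[
   \SP{x}{y} := \idual{\Psi x}{\Psi y}{L^2(0,\infty;Y)},\qquad x,y\in X .
\]
The two-sided bound $m^2\norm{x}^2\le\SP{x}{x}\le M^2\norm{x}^2$ (with the off-diagonal terms controlled by Cauchy--Schwarz) shows that $\SP{\cdot}{\cdot}$ is a genuine inner product on $X$ whose associated norm is equivalent to $\norm{\cdot}$.

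Next, the contraction property reduces to the observation that $\Psi$ intertwines $T(s)$ with the left shift on $L^2(0,\infty;Y)$. For $x\in\DOMAIN(A)$ one has, using $T(s)\DOMAIN(A)\subseteq\DOMAIN(A)$ and $CT(t)T(s)x=CT(t{+}s)x$,
\[
   (\Psi T(s)x)(t) = CT(t{+}s)x = (\Psi x)(t{+}s),
\]
so that $\Psi T(s)$ equals $\Psi$ followed by left translation by $s$. Since left translation is a contraction on $L^2(0,\infty;Y)$ and $\DOMAIN(A)$ is dense, it follows that $\norm{\Psi T(s)x}_{L^2}\le\norm{\Psi x}_{L^2}$ for every $x\in X$, i.e.\ $\SP{T(s)x}{T(s)x}\le\SP{x}{x}$. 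Thus $T(s)$ is a $\SP{\cdot}{\cdot}$-contraction for each $s\ge0$.

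Finally I would convert the equivalent inner product into an honest similarity. The estimates above give a bounded, positive, self-adjoint, boundedly invertible operator $Q$ on $X$, determined by $\SP{x}{y}=\dual{Qx}{y}$ and satisfying $m^2 I\le Q\le M^2 I$. Setting $S:=Q^{\onehalf}$, which is again bounded and boundedly invertible, one has $\norm{Sx}^2=\SP{x}{x}$, so the contraction estimate becomes $\norm{S T(s)S^{-1}y}\le\norm{y}$ for all $y\in X$. Hence $(S T(t)S^{-1})_{t\ge0}$ is a strongly continuous contraction semigroup (strong continuity and the semigroup law being inherited through the bounded similarity $S$), and $T(t)_{t\ge0}$ is similar to it.

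I do not expect a serious obstacle: this is the classical observation-Gramian renorming argument, and the Hilbert space structure of both $X$ and $Y$ is used only to produce the inner product and the operator $Q$. The only points requiring care are the passage from $\DOMAIN(A)$ to all of $X$ via the bounded, bounded-below extension $\Psi$, and the identification of $\Psi T(s)$ with a left shift, which rests on $T(s)\DOMAIN(A)\subseteq\DOMAIN(A)$ and the identity $CT(t)T(s)=CT(t{+}s)$ valid on $\DOMAIN(A)$.
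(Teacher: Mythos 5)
Your proposal is correct and follows essentially the same route as the paper: renorm $X$ by the observation Gramian $\SP{x}{y}=\idual{\Psi x}{\Psi y}{L^2(0,\infty;Y)}$, use admissibility and exact observability for norm equivalence, and use the left-shift intertwining $(\Psi T(s)x)(t)=(\Psi x)(t{+}s)$ for contractivity. The only difference is cosmetic: the paper stops at contractivity in the equivalent norm, while you make the similarity explicit via $S=Q^{\onehalf}$, a step the paper leaves implicit.
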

\begin{proof}
  Denote by $\langle x, y \rangle_Y$ the scalar product of $Y$ and
  define for $x, y \in \DOMAIN(A) $
\[
\langle x, y \rangle_X^\sim
  := \int_0^\infty \langle CT(t)x, CT(t)y \rangle_Y\,dt.
\]
This is clearly a bilinear (or sesquilinear) form on $\DOMAIN(A)
\times \DOMAIN(A)$.  Admissibility and exact observability imply that
$\norm{x}_X $ and $\norm{x}_{X}^\sim$ are equivalent.  By density, we
extend this to all $ x \in X$ and $\norm{\cdot}_X^\sim$ is associated
with a scalar product on $X$.  With respect to the new norm,
\[
   \norm{ T(t)x }_{X}^\sim 
= \Bigl(\int_0^\infty \norm{ CT(t{+}s)x }^2\,ds \Bigr)^\onehalf 
= \Bigl(\int_t^\infty \norm{ CT(s)x }^2\,ds \Bigr)^\onehalf 
\le     \norm{ x }_{X}^\sim 
\]
and so  $T(t)_{t\ge 0}$ is a contraction semigroup with respect to 
$\norm{\cdot}_X^ \sim$.
\end{proof}
Now we turn back to Theorem~\ref{thm:main-bernhard}. As mentioned at
the beginning of the previous section, the lower square function
estimate (\ref{eq:concrete-lower-sq-f-est}) holds for small $\beta$
for all generators of bounded strongly continuous semigroups. It is
then tempting to use the theorem for small $\beta$ in order to include
a large class of semigroups $T(t)$.  On the other hand, if we assume
that $0\in \varrho(A)$ and $\norm{CA^{-\al-\eps} x} \ge \delta
\norm{x}$, one also has
\[
    \norm{ CA^{-\al} x } 
=   \norm{ CA^{-\al-\eps} A^{\eps} x } 
\ge \delta \norm{A^{\eps} x }
\ge \delta' \norm{x}. 
\]
That is, the invertibility condition on $C A^{-(1-\beta)}$ becomes
more restrictive when $\beta$ decreases. To admit more observation
operators $C$ one therefore seeks for values of $\beta$ large enough.
Combining both conditions forces to play with different values of
$\beta$ in different situations. In the following corollary we choose
$\beta=\onehalf$.
\begin{corollary}\label{cor:contraction-sg}
  Let $-A$ be the generator of a semigroup of contractions
  $(T(t))_{t\ge0}$ on a Hilbert space $X$. If $A$ has dense range and
  $\norm{CA^{-\onehalf} x} \ge \delta \norm{x}$ for all $x\in
  \DOMAIN(A) \cap \RANGE(A)$, then
  \begin{equation}\label{eq:exobs-on-core-h}
  m^2 \norm{x}^2 \le 
                 \int_0^\infty \norm{ C T(t)x }_Y^2 \,dt
\end{equation}
for all $x\in \DOMAIN(A) \cap \RANGE(A)$. In addition, if either 
       $A$ is invertible 
 or $C$ is infinite-time admissible for $A$
  then $C$ is
  infinite-time exactly observable for $A$.
\end{corollary}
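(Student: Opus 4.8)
The plan is to recognise the corollary as the special case $\beta=\onehalf$ of Theorem~\ref{thm:main-bernhard}, so that the work reduces to verifying the two genuine hypotheses of that theorem: that $A$ is injective with dense range, and that the lower square function estimate~(\ref{eq:concrete-lower-sq-f-est}) holds for $\beta=\onehalf$. With $\beta=\onehalf$ the condition~(\ref{eq:Rbeta}) reads $\norm{CA^{-\onehalf}x}_Y\ge\delta\norm{x}$, which is exactly the standing hypothesis, so nothing is needed there.

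First I would settle injectivity. Since $-A$ generates a contraction semigroup on the Hilbert space $X$, the operator $A$ is m-accretive, and its adjoint $-A^*$ generates the adjoint (again contraction) semigroup $T(t)^*$, so $A^*$ is m-accretive as well. The kernel of $A$ is precisely the fixed-point set $\{x\suchthat T(t)x=x \text{ for all } t\}$, and likewise for $A^*$ and $T(t)^*$. For a contraction semigroup on a Hilbert space these two fixed-point sets coincide: if $T(t)x=x$ then $\norm{x}^2=\langle T(t)x,x\rangle=\langle x,T(t)^*x\rangle$ while $\norm{T(t)^*x}\le\norm{x}$, and the equality case of Cauchy--Schwarz forces $T(t)^*x=x$. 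Hence $\ker A=\ker A^*=(\overline{\RANGE(A)})^\perp=\{0\}$, the last step using the assumed density of $\RANGE(A)$. Thus $A$ is injective with dense range and sectorial of angle $\pihalbe$, and the hypotheses of Theorem~\ref{thm:main-bernhard} concerning $A$ are met.

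The heart of the matter is the estimate~(\ref{eq:concrete-lower-sq-f-est}) with $\beta=\onehalf$, which, as observed before the theorem, is the lower square function estimate~(\ref{eq:abstract-lower-eq-f-est}) for the specific function $\varphi(z)=z^{-\onehalf}(e^{-2z}-e^{-z})\in H_0^\infty(\CSECT{\pihalbe})$. To produce it I would use the duality route indicated in Section~4: pick an auxiliary $\tilde\psi\in H_0^\infty(\SECT{\mu})$ with $\mu>\pihalbe$, normalised so that $\int_0^\infty \psi(s)\varphi(s)\,\tfrac{ds}{s}=c\ne0$, where $\psi(z):=\overline{\tilde\psi(\bar z)}$. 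Since $A^*$ is m-accretive it has a bounded $H^\infty(\SECT{\mu})$ calculus for $\mu>\pihalbe$, hence an upper square function estimate for $\tilde\psi$; the Calder\'on reproducing identity $\int_0^\infty(\psi\varphi)(tA)\,\tfrac{dt}{t}=c\,I$ on $\RANGE(A)\cap\DOMAIN(A)$ then gives, using $(\tilde\psi(tA^*))^*=\psi(tA)$, that for such $x$ and all $y$
\[
  c\,\langle x,y\rangle=\int_0^\infty \langle \varphi(tA)x,\,\tilde\psi(tA^*)y\rangle\,\tfrac{dt}{t}.
\]
Cauchy--Schwarz followed by the upper estimate for $A^*$ yields $\abs{c}\,\norm{x}\le M\bigl(\int_0^\infty\norm{\varphi(tA)x}^2\,\tfrac{dt}{t}\bigr)^{\onehalf}$, which is exactly~(\ref{eq:concrete-lower-sq-f-est}).

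The main obstacle is that $\varphi$ contains the exponentials $e^{-z},e^{-2z}$ and is therefore bounded and holomorphic only on the \emph{closed} half-plane $\CSECT{\pihalbe}$; it extends to no strictly larger sector, so the off-the-shelf $H^\infty(\SECT{\mu})$ calculus of m-accretive operators (which controls only functions on open sectors of angle $>\pihalbe$) does not apply to $\varphi$ itself. Making sense of $\varphi(tA)$ and of the reproducing integral therefore requires the half-plane functional calculus for m-accretive operators, where the contraction hypothesis is essential; this is precisely the Hilbert-space material developed in the final section, which I would invoke at this point. Once~(\ref{eq:concrete-lower-sq-f-est}) is in hand, Theorem~\ref{thm:main-bernhard} gives~(\ref{eq:exobs-on-core-h}), and the concluding assertions follow from Corollary~\ref{cor:main-thm-A-inv} when $A$ is invertible and from Corollary~\ref{cor:main-thm-admiss} when $C$ is infinite-time admissible.
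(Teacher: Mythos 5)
Your proposal is correct and reaches the statement by the same overall strategy as the paper: reduce to Theorem~\ref{thm:main-bernhard} with $\beta=\onehalf$, the decisive input being the von Neumann--based bounded $H^\infty(\CSECT{\pihalbe})$ functional calculus of an m-accretive operator on Hilbert space. You differ, legitimately, in two local steps. For injectivity, the paper simply invokes accretivity plus reflexivity of $X$ via \cite[Theorem~3.8]{CowlingMcIntoshDoustYagi}; your fixed-point argument --- that for a contraction semigroup on a Hilbert space the equality case of Cauchy--Schwarz forces the fixed points of $T(t)$ and $T(t)^*$ to coincide, whence $\ker A=\ker A^*=(\overline{\RANGE(A)})^{\perp}=\{0\}$ --- is an elementary, self-contained substitute that exploits the contraction hypothesis directly. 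For the lower square function estimate, the paper first upgrades \emph{upper} square function estimates from the open-sector class to all of $H_0^\infty(\CSECT{\pihalbe})$ by the factorization $\psi(z)=\psi_\eps(z)\cdot z^{-\eps}\psi(z)+\psi_{1-\eps}(z)\cdot z^{\eps}\psi(z)$ with $\psi_\al(z)=z^{\al}/(1+z)$ (applied to $A^*$), and then cites the duality passage of \cite{McIntosh:H-infty-calc,CowlingMcIntoshDoustYagi}; you instead write the duality out as an asymmetric Calder\'on pairing of the boundary-class $\varphi(z)=z^{-\onehalf}(e^{-2z}-e^{-z})$ against an open-sector auxiliary $\tilde\psi\in H_0^\infty(\SECT{\mu})$, $\mu>\pihalbe$, so that only McIntosh's original open-sector upper estimates for $A^*$ are needed and the factorization trick is dispensed with. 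The trade-off, which you correctly identify as the delicate point, is that the burden shifts to defining $\varphi(tA)$ and justifying the reproducing identity $\int_0^\infty(\psi\varphi)(tA)\,\tfrac{dt}{t}=c\,I$ on $\DOMAIN(A)\cap\RANGE(A)$ for a function holomorphic on no sector larger than the half-plane; this again requires the $H^\infty(\CSECT{\pihalbe})$ calculus (the paper needs the same calculus at the corresponding point, namely for the multipliers $z^{\pm\eps}\psi(z)$), so von Neumann's inequality is the irreducible ingredient in both versions. Your reduction of the two concluding assertions to Corollaries~\ref{cor:main-thm-A-inv} and~\ref{cor:main-thm-admiss} matches the paper exactly.
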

Notice that in view of
Proposition~\ref{prop:necessarily-contraction-sg}, the hypothesis of a
semigroup of contractions is necessity to be able to conclude in the
case that $C$ is admissible.
\begin{proof}
  By the Lumer-Phillips theorem, $A$ is an accretive operator, i.e.
  $\Re \langle A x, x \rangle \ge 0$ for all $x\in \DOMAIN(A)$. Since
  $A$ has dense range and $X$ is reflexive, $A$ is actually injective
  (cf.~\cite[Theorem.~3.8]{CowlingMcIntoshDoustYagi}).  We need to
  verify that $A$ admits lower square function estimates
  (\ref{eq:abstract-lower-eq-f-est}) with functions $\varphi$ that are
  bounded holomorphic on $\SECT{\pihalbe}$ and continuous on
  $\overline{\SECT{\pihalbe}}$.  We shall explain how this follows
  from the functional calculus.
  First notice that $A$ has a bounded
  $H^\infty(\overline{\SECT{\pihalbe}})$ functional calculus. This
  means that for every bounded holomorphic function $\varphi$ on
  $\SECT{\pihalbe}$ and continuous on $\overline{\SECT{\pihalbe}}$,
  $\varphi(A)$ is well defined and
\[
   \norm{ \varphi(A) }_{{\mathcal L}(X)} \le M \sup_{z \in
     \overline{\SECT{\pihalbe}}} | \varphi(z) |.
\]
This is essentially von Neumann's inequality for contractions on a
Hilbert space. Indeed, if $A$ is accretive, $T := (A-1)(A+1)^{-1}$ is
a contraction and von Neumann's inequality states $\norm{ p(T) } \le
\norm{p}_{H^\infty(\DD)}$ for every polynomial $p$. From this, the
$H^\infty$-calculus can be derived by approximation arguments. Two
different direct proofs for the boundedness of the
$H^\infty(\overline{\SECT{\pihalbe}})$ calculus are given in
\cite[Theorem~7.1.7]{Haase:Buch}. One uses a dilation theorem of the
semigroup into a unitary $C_0$-group due to Sz.-Nagy. The second
exploits accretivity of $A$ from a 'numerical range' viewpoint and can
be seen as the most simple case of the Crouzeix-Delyon theorems
\cite{CrouzeixDelyon,BadeaCrouzeixDelyon}.
Having the boundedness of the functional calculus on
$\overline{\SECT{\pihalbe}}$ in hands we certainly have upper square
function estimates for functions in $H_0^\infty(\SECT{\theta})$ when
$\theta>\pihalbe$. This is well known and proved by McIntosh
\cite{McIntosh:H-infty-calc}.  For the particular functions
$\psi_\al(z) := z^{\al} / (1+z)$ for $\al\in (0,1)$ this means that
for some positive constants $k_\al$
\begin{equation}\label{eq:calcul}
k_\al \int_0^\infty \norm{ \psi_\al(tA)x }^2 
   \,\tfrac{dt}t \le \norm{x}^2.
   \end{equation}
   Given now a function $\psi \in H_0^\infty(\overline{S_\pihalbe})$,
   we choose $\eps>0$ small such that $|\psi(z)| \le
   M\max(|z|^{2\eps}, |z|^{-2\eps})$ and write
\[
       \psi(z) = \psi_\eps(z) \times z^{-\eps}\psi(z) +
       \psi_{1-\eps}(z) \times z^{\eps}\psi(z).
\]
Notice that $z^{\pm \eps}\psi(z) \in
H_0^\infty(\overline{S_\pihalbe})$.  Therefore, upper square function
estimate for $A$ with the function $\psi$ follow from
(\ref{eq:calcul}) using the boundedness of the
$H^\infty(\overline{S_\pihalbe})$ calculus.
All what we explain here works also for the adjoint $A^*$. By a
duality argument as in \cite{McIntosh:H-infty-calc} or
\cite{CowlingMcIntoshDoustYagi}, we pass from upper square function
estimates for $A^*$ to lower square function estimates for $A$. As a
particular case, (\ref{eq:concrete-lower-sq-f-est}) holds with $\beta
= \frac{1}{2}$.  We then apply Theorem~\ref{thm:main-bernhard}.
\end{proof}
Again as in Corollary~\ref{cor:ex-tau}, we can obtain observability in
finite time by adding a constant $w$ to $A$. That is
\begin{corollary}\label{cor:ex-tau-hilbert}
  Let $-A$ be the generator of a semigroup $(T(t))$ on the Hilbert
  space $X$. Suppose that there exists a constant $\om > 0$ such that
  $A + \om$ is accretive and that $C: \DOMAIN(A) \to Y$ is bounded.  
  If $\norm{C(A+\om)^{-\onehalf} x} \ge \delta \norm{x}$
  for all $x\in \DOMAIN(A)$ and some $\delta >0$, then $C$ is exactly
  observable in finite time.
\end{corollary}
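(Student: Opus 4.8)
The plan is to mimic the passage from Theorem~\ref{thm:main-bernhard} to Corollary~\ref{cor:ex-tau}, replacing the ``bounded semigroup'' hypothesis by accretivity in the same way that Corollary~\ref{cor:contraction-sg} replaces the abstract estimate (\ref{eq:concrete-lower-sq-f-est}) by accretivity. Set $B := A + \om$. By hypothesis $B$ is accretive, so the Lumer--Phillips theorem shows that $-B$ generates the contraction semigroup $S(t) := e^{-\om t}T(t)$; in particular $\norm{T(t)} \le e^{\om t}$ and $S$ is bounded. Exactly as in the proof of Corollary~\ref{cor:contraction-sg}, $B$ is injective and admits a bounded $H^\infty(\CSECT{\pihalbe})$ functional calculus (von Neumann's inequality for the contraction $(B-1)(B+1)^{-1}$), so $B$ satisfies the lower square function estimate (\ref{eq:concrete-lower-sq-f-est}) with $\beta=\onehalf$. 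Since the assumed bound $\norm{C(A+\om)^{-\onehalf}x}_Y \ge \delta\norm{x}$ is precisely (\ref{eq:Rbeta}) for $B$ with $\beta=\onehalf$, Theorem~\ref{thm:main-bernhard} applies to $B$ and yields a constant $m_0>0$ with
\[
   m_0^2\norm{x}^2 \le \int_0^\infty \norm{CS(t)x}_Y^2\,dt
\]
for all $x$ in the core $\DOMAIN(A)\cap\RANGE(B)$.

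Next I would pass from this infinite-time estimate for $S$ to finite-time observability for $T$ along the lines of Corollary~\ref{cor:ex-tau}. Since $\norm{CS(t)x}_Y = e^{-\om t}\norm{CT(t)x}_Y$, I split
\[
   \int_0^\infty \norm{CS(t)x}_Y^2\,dt
 = \int_0^\tau \norm{CS(t)x}_Y^2\,dt + \int_\tau^\infty \norm{CS(t)x}_Y^2\,dt,
\]
estimate the first summand by $\int_0^\tau \norm{CT(t)x}_Y^2\,dt$ (the weight $e^{-2\om t}$ being at most $1$), and rewrite the tail by the semigroup law as $\int_0^\infty \norm{CS(t)S(\tau)x}_Y^2\,dt$. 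Invoking infinite-time admissibility of $C$ for $S$, as in the proof of Corollary~\ref{cor:ex-tau}, this tail is bounded by $M'\norm{S(\tau)x}^2$. Choosing $\tau$ so large that $M'\norm{S(\tau)x}^2 < \onehalf m_0^2\norm{x}^2$ and absorbing, one gets $m^2\norm{x}^2 \le \int_0^\tau \norm{CT(t)x}_Y^2\,dt$ on the core, which is then extended to all of $\DOMAIN(A)$ by the approximation argument of Corollary~\ref{cor:main-thm-admiss}.

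The main obstacle is this absorption step: it requires the tail to be uniformly small, i.e. $\norm{S(\tau)x}^2 \le \theta(\tau)\norm{x}^2$ with $\theta(\tau)\to 0$, which amounts to exponential stability of $S$. In Corollary~\ref{cor:ex-tau} this is automatic, since there $T$ is bounded and hence $S=e^{-\om\cdot}T$ decays like $e^{-\om t}$; here $T$ may grow like $e^{\om t}$, so $S$ is a priori only a contraction and its tail need not decay uniformly. The crux is therefore to upgrade $S$ to an exponentially stable semigroup, which I would obtain from \emph{strict} accretivity of $B$: if $\Re\dual{Bx}{x}\ge c\norm{x}^2$ for some $c>0$, then $\norm{(is+B)^{-1}}\le 1/c$ for every $s\in\RR$, and the Gearhart--Pr\"uss theorem yields $\norm{S(t)}\le Me^{-ct}$. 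Strict accretivity holds in the cases of interest -- for instance whenever $-A$ generates a bounded group, since then $\Re\dual{(A+\om)x}{x}=\om\norm{x}^2$ -- while the borderline merely-accretive case, in which $S$ is for example a unitary group, would have to be handled by a dedicated argument for conservative systems rather than by truncating an infinite-time estimate.
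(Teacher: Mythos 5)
Your steps (1)--(4) reproduce the paper's intended proof exactly: the paper's entire argument for this corollary is the one-line remark that it follows ``as in Corollary~\ref{cor:ex-tau}'' by adding the constant $\om$ to $A$, where accretivity of $B=A+\om$ supplies, via the proof of Corollary~\ref{cor:contraction-sg}, the contraction semigroup $S(t)=e^{-\om t}T(t)$, the bounded $H^\infty(\CSECT{\pihalbe})$ calculus, hence (\ref{eq:concrete-lower-sq-f-est}) with $\beta=\onehalf$ for $B$, and then Theorem~\ref{thm:main-bernhard} followed by the splitting-and-absorption step of Corollary~\ref{cor:ex-tau}. (A small point you gloss over: injectivity of $B$ is not automatic here, since dense range of $B$ is not among the hypotheses as it was in Corollary~\ref{cor:contraction-sg}; it is implicit in writing $(A+\om)^{-\onehalf}$.)

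The ``obstacle'' you flag in the absorption step is genuine, and it is a gap in the paper's own sketch rather than a defect of your reconstruction. In Corollary~\ref{cor:ex-tau} the absorption closes only because $T$ is there assumed \emph{bounded}, so the extracted factor $e^{-\om\tau}$ survives against $\norm{T(\tau)x}\le M_T\norm{x}$. Under the present hypotheses, accretivity of $A+\om$ yields exactly $\norm{T(t)}\le e^{\om t}$ and nothing more, so the tail estimate degenerates to $\int_\tau^\infty\norm{CS(t)x}_Y^2\,dt\le M'\norm{S(\tau)x}^2\le M'\norm{x}^2$ with no decay in $\tau$; moreover, applying the infinite-time lower estimate to $y$ in the core shows $m_0^2\le M'$, so the absorption can \emph{never} close without a uniform strict bound $\norm{S(\tau)}<1$, i.e.\ exponential stability of $S$ --- precisely your diagnosis. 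The borderline case is not hypothetical: for $A=-\om+iM_\xi$ on $L^2(\RR)$ with $C=M_{(i\xi)^{1/2}}$, one has $\norm{C(A+\om)^{-\onehalf}x}=\norm{x}$ wherever the expression is defined, $C$ bounded on $\DOMAIN(A)$, yet $\int_0^\tau\norm{CT(t)f}_Y^2\,dt=\tfrac{e^{2\om\tau}-1}{2\om}\,\norm{\,|\xi|^{\onehalf}f}_2^2$ has infimum $0$ over unit vectors $f\in\DOMAIN(A)$, so finite-time observability fails unless the quantifier ``for all $x\in\DOMAIN(A)$'' is read as forcing $(A+\om)^{-\onehalf}$ to be everywhere defined on $\DOMAIN(A)$, which smuggles in a spectral gap at $0$. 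So your strict-accretivity completion is the correct repair (though Gearhart--Pr\"uss is overkill: if $B-c$ is accretive, Lumer--Phillips already gives $\norm{S(t)}\le e^{-ct}$), and your refusal to claim the merely-accretive case is warranted: the paper's argument covers it only under the unstated extra assumption that $T$ itself is bounded or $S$ exponentially stable. One lacuna you inherit from the paper rather than introduce: the infinite-time admissibility of $C$ for $S$, needed for the tail bound, is asserted in Corollary~\ref{cor:ex-tau} but not derived from the hypotheses (boundedness of $C:\DOMAIN(A)\to Y$ only yields an admissibility estimate against the graph norm), so in both the paper's argument and yours it would have to be added as a hypothesis or established separately.
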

\begin{examples}
1- Consider the  Schr\"odinger equation
\[
\left\{
  \begin{array}{lcl}
    \frac{\partial u}{\partial t} &=& i \Delta u  \\
    u &=& 0\  {\rm on }\,  \partial \Om\\
    y(t) &=& C u(t) = \nabla u(t)
  \end{array}
\right.
\]
with $X = L^2(\Om)$, $Y = (L^2(\Om))^d$ and $\Om$ is any open subset
of $\RR^d$ with boundary $\partial \Om$. In this problem, $\Delta$
denotes the Laplacian with Dirichlet boundary conditions. It is
(the negative of) the associated operator with the symmetric form
\[
\mathfrak{a}(u,v) := \int_\Om \nabla u \nabla v dx, \ \,
\DOMAIN(\mathfrak{a}) = W_0^{1,2}(\Om).
\]
Since 
\[
\norm{(-i \Delta)^{1/2} u}_2^2 = \norm{(-\Delta)^{1/2} u}_2^2  =
\norm{ Cu}_2^2 
\]
we may conclude from Corollary~\ref{cor:contraction-sg} that 
\begin{equation}\label{schr}
  \delta \norm{f}_2^2 \le 
                 \int_0^\infty \norm{ \nabla  e^{it\Delta} f }_2^2 \,dt
\end{equation}
for all $f\in \DOMAIN(\Delta) \cap \RANGE(\Delta)$.  Note that we can
replace here the Dirichlet boundary condition by the Neumann one. The
arguments are the same, we just need to replace $
\DOMAIN(\mathfrak{a}) = W_0^{1,2}(\Om)$ by $ \DOMAIN(\mathfrak{a})
=W^{1,2}(\Om)$. The same method applies for more general boundary
conditions.
\vspace{.5cm}
2- Let  $A$ be the uniformly elliptic operator 
\[
A = - \sum_{j,k=1}^d
\frac{\partial}{\partial x_k} \bigl( a_{jk} \frac{\partial}{\partial
  x_j} \bigr)
\] with bounded measurable coefficients $a_{jk} \in
L^\infty(\RR^d)$. The operator $A$ is defined by sesquilinear form
techniques (see for example \cite{Ouhabaz:book}) and note that
$A$ is not necessarily self-adjoint. \\
It is a standard fact that $-A$ generates a contraction semigroup on
$L^2(\RR^d)$. Consider the problem
\[
\left\{
  \begin{array}{lcl}
    \frac{\partial u}{\partial t} &=& -A u  \\
    y(t) &=& C u(t) = \nabla u(t)
  \end{array}
\right.
\]
with $X = L^2(\RR^d)$ and $Y = (L^2(\RR^d)^d$. By the solution of the
Kato's square root problem (see
\cite{Auscher-Hofmann-Lacey-McIntosh-Tchamitchian}), it is known that
\begin{equation}\label{kato}
\norm{\nabla u}_2 \approx  \norm{A^\onehalf} u \, \, \forall u \in W^{1,2}(\RR^d).
\end{equation}
On the other hand, the accretivity of $A$ implies an $H^\infty$
functional calculus on $L^2(\RR^d)$. Hence it satisfies upper and
lower square function estimate (\ref{eq:abstract-upper-eq-f-est}) and
(\ref{eq:abstract-lower-eq-f-est}). In particular,
\begin{equation}\label{sq2}
\norm{f}_2^2 \approx \int_0^\infty \norm{ A^\onehalf  e^{-tA} f }_2^2 \,dt.
\end{equation}
This  implies that $C = \nabla$ is both admissible and exactly 
observable for $A$ in infinite time. \\
Note that the semigroup $e^{-tA}$ is bounded holomorphic on some
sector $\SECT{\omega}$ and $e^{-te^{iw} A}$ is a contraction on
$L^2(\RR^d)$ (see \cite{Ouhabaz:book}). Taking the maximal angle $w$,
we obtain a contraction semigroup $e^{-te^{iw} A}$ which is not
holomorphic. Since $(e^{iw}A)^\onehalf = e^{iw/2}A^\onehalf $ we see
that (\ref{kato}) holds with $e^{iw}A$ in place of $A$. By Corollary
\ref{cor:contraction-sg} we have
\begin{equation}\label{chal}
  \delta \norm{f}_2^2 \le 
                 \int_0^\infty \norm{ \nabla  e^{-t e^{iw}A} f }_2^2 \,dt
\end{equation}
for $f \in \DOMAIN(A) \cap \RANGE(A)$. We may consider the same
problem on a bounded Lipschitz domain instead of $\RR^d$. In this
case, $A$ is invertible. Hence $e^{iw} A$ is also invertible and we
obtain (\ref{chal}) for all $f \in \DOMAIN(A)$. This means that $C$ is
exactly observable for $e^{iw}A$.
\end{examples}
\def\cprime{$'$}
\providecommand{\bysame}{\leavevmode\hbox to3em{\hrulefill}\thinspace}

\end{document}